\newcommand \A[1]{{\bf (#1)}}
\newtheorem{theorem}{Theorem}[section]
\newtheorem{proposition}[theorem]{Proposition}
\newtheorem{corollary}[theorem]{Corollary}
\newtheorem{lemma}[theorem]{Lemma}
\newtheorem{definition}[theorem]{Definition}
\newtheorem{remark}[theorem]{Remark}
\def \a {{\alpha}}
\def \d {{\delta}}
\def \s {{\sigma}}
\def \R {{\mathbb {R}}}
\def \N {{\mathbb {N}}}
\def \x {{\xi}}
\def \e {{\varepsilon}}
\def \eps {{\varepsilon}}
\def \r {{\varrho}}
\def \t {{\tau}}
\def \t {{\tau}}
\def \div {{\text{\rm div}}}
\def \tilde {\widetilde}
\def\p{\partial}
\def \a {{\alpha}}
\def \d {{\delta}}
\def \s {{\sigma}}
\def\cA{{\mathcal A}}
\def\cC{{\mathcal C}}
\def\cL{{\mathcal L}}
\def\cS{{\mathcal S}}
\def\mD{{\mathbb D}}
\def\mE{{\mathbb E}}
\def\mI{{\mathbb I}}
\def\mN{{\mathbb N}}
\def\mP{{\mathbb P}}
\def\mR{{\mathbb R}}
\def\1{{\mathbf{1}}}
\def\sF{{\mathscr F}}
\def\E{\mathbb E}
\def\geq{\geqslant}
\def\leq{\leqslant}
\def\ge{\geqslant}
\def\le{\leqslant}
\def\div{\mathord{{\rm div}}}
\def\eps{\varepsilon}
\def\t{\tau}
\def\e{\mathrm{e}}
\def\a{\alpha}
\def\p{\partial}
\def\d{\delta}
\def\[{{\Big[}}
\def\]{{\Big]}}
\def\<{{\langle}}
\def\>{{\rangle}}
\def\({{\Big(}}
\def\){{\Big)}}
\def\bx{{\mathbf{x}}}
\def\tr{\mathrm {tr}}
\def\dif{{\mathord{{\rm d}}}}
\def\no{\nonumber}
\def\={&\!\!=\!\!&}
\def\bt{\begin{theorem}}
\def\et{\end{theorem}}
\def\bl{\begin{lemma}}
\def\el{\end{lemma}}
\def\br{\begin{remark}}
\def\er{\end{remark}}
\def\bx{\begin{Examples}}
\def\ex{\end{Examples}}
\def\bd{\begin{definition}}
\def\ed{\end{definition}}
\def\bp{\begin{proposition}}
\def\ep{\end{proposition}}
\def\bc{\begin{corollary}}
\def\ec{\end{corollary}}
\def\bpf{\begin{proof}}
\def\epf{\end{proof}}
\def\geq{\geqslant}
\def\leq{\leqslant}
\def\ge{\geqslant}
\def\le{\leqslant}
\newcommand{\leqnomode}{\tagsleft@true\let\veqno\@@leqno}
\newcommand{\reqnomode}{\tagsleft@false\let\veqno\@@eqno}
\numberwithin{equation}{section}
\title{\textbf{Density and gradient estimates for non degenerate Brownian SDEs with unbounded measurable drift}}
\author{\textbf{S. Menozzi}\footnote{Laboratoire de Mod\'elisation Math\'ematique d'Evry (LaMME), Universit\'e d'Evry Val d'Essonne, 23 Boulevard de France 91037 Evry, France and Laboratory of Stochastic Analysis, HSE,
Pokrovsky Blvd, 11, Moscow, Russian Federation. stephane.menozzi@univ-evry.fr}, \textbf{A. Pesce}\footnote{Dipartimento di Matematica, Piazza di Porta San Donato, 5 Bologna (Italy), antonello.pesce2@unibo.it}, \textbf{X. Zhang}\footnote{School of Mathematics and Statistics, Wuhan University, Wuhan, Hubei 430072, P.R. China, Email: XichengZhang@gmail.com}}
\begin{document}

\maketitle

\begin{abstract}
We consider non degenerate Brownian SDEs with H\"older continuous in space diffusion coefficient and unbounded drift with linear growth.
We derive two sided bounds for the associated density and pointwise controls of its derivatives up to order two under some additional spatial H\"older continuity assumptions on the drift. 
Importantly, the estimates reflect the transport of the initial condition by the unbounded drift through an auxiliary, possibly regularized, flow. 

\end{abstract}

{\small{\textbf{Keywords:} unbounded drift, heat kernel estimates, gradient estimates, 
%parametrix.
parametrix method.
}}

{\small{\textbf{MSC \textcolor{black}{2010}:} Primary: 60H10, 35K10; Secondary: 60H30.}}

\reqnomode

\section{Introduction} \label{Intro}

\subsection{Statement of the problem}
We are interested in providing Aronson-like bounds and corresponding pointwise  estimates for the derivatives up to order two for the transition probability density  of 
the following $d$-dimensional, non-degenerate diffusion
\begin{equation}\label{SDE_0}
\dif X_t=b(t,X_t)\dif t+\s(t,X_t)\dif W_t, \quad t\ge 0,\ X_0=x,
\end{equation}
where $(W_t)_{t\ge 0}$ is a standard $d$-dimensional Brownian motion on the
classical Wiener space  $(\Omega,\sF,(\sF_t)_{t\ge 0},\mathbb P)$.
The diffusion coefficient $\sigma $ is assumed to be rough in time, and H\"older continuous in space. 
The drift $b$ is assumed to be measurable and to have linear growth in space. 
Importantly, we will always assume throughout the article that the diffusion coefficient $\sigma $ is bounded and separated from 0 (usual uniform ellipticity condition).

When both coefficients $b,\sigma$ are {\it bounded} and H\"older continuous, it is well known that there exists a unique weak 
solution to \eqref{SDE_0} which admits a density  (see e.g. \cite{stro:vara:79}, \cite{frie:75}, \cite{CHXZ}), i.e. for all $A\in {\mathcal B}(\R^d) $ (Borel $\sigma $-field of $\R^d $), 
$$
\mathbb P[X_t\in A|X_0=x]=\int_A p(0,x,t,y) \dif y.
$$ 
Furthermore,
it can be proved by the parametrix method that the transition density $p(0,x,t,y)$ 
enjoys the following two sided Gaussian estimates on a compact set in time:
\begin{align}\label{bound0}
C^{-1}g_{\lambda^{-1}}(t,x-y)&\le p(0,x,t,y)\le Cg_\lambda(t,x-y)
\end{align}
as well as the following gradient estimate
\begin{align}\label{bound1}
|\nabla^j_xp(0,x,t,y)|&\le Ct^{-\frac{j}{2}}g_\lambda(t,x-y), \quad j=1,2, 
\end{align}
where 
\begin{align}\label{GG1}
g_\lambda(t,x):=t^{-\frac{d}{2}}\exp\big(-\lambda |x|^2/t\big), \ \ \lambda \in (0,1], t>0,
\end{align}
and the constants $\lambda, C\ge 1$ only depend on the regularity of the coefficients, the non-degeneracy constants of the diffusion coefficients, the dimension $d$, and for the constant $C$,  
on the maximal time considered (see \cite{Friedman} and \cite{aron:59}, \cite{MR217444}).
Such methods have been successfully
applied to derive upper bounds up to the second order derivative for more general cases, such as operators satisfying a strong H\"ormander condition (see \cite{10.2307/3844906}), and also Kolmogorov operators with linear drift (see \cite{Polidoro2} and \cite{DiFrancescoPascucci2}). 
A different approach consists in viewing a logarithmic transformation of $p$ as the value function of a certain stochastic control problem (see \cite{MR806625}): such idea allows then to get the desired density estimates by choosing appropriate controls and eventually an upper bound for the \textit{logarithmic gradient} (see \cite{sheu:91}).
 
When the drift is unbounded and non-linear fewer results are available.  In fact, in this case 
it is no longer expected that the two sided estimates as given in \eqref{bound0} hold. Consider for instance the following Ornstein-Uhlenbeck (OU)-process
$$
\dif X_t=X_t\dif t+\dif W_t,\ \ X_0=x,
$$
which has, with the notations of \eqref{GG1}, the non-spatial homeogenous density 
$$
p_{\rm OU}(0,x,t,y)=(\pi (\e^{2t}-1))^{-d/2}g_{t/(\e^{2t}-1)}(t,\e^{t}x-y).
$$
In \cite{dela:meno:10} the authors derive two sided density bounds for a class of degenerate operators 
with {\it unbounded and Lipschitz drift}, satisfying a weak H\"ormander condition, by combining the two previous approaches: parametrix and logarithmic transform. 
Indeed, when the drift is unbounded it becomes difficult to get good controls for the iterated kernels in the \textit{blunt} parametrix expansion.
In our non-degenerate parabolic setting those bounds still hold provided the drift is \textit{globally} Lipschitz continuous in space. Then, they  read as:
\begin{equation}
C^{-1}g_{\lambda^{-1}}(t,\theta_{t}(x)-y)\le p(0,x,t,y)\le Cg_\lambda(t,\theta_{t}(x)-y), \label{Density_bounds0}
\end{equation}
where $\theta$ stands for the deterministic flow associated with the drift, i.e. 
$$
\dot \theta_{t}(x)=b(t,\theta_t(x)),\ t\ge 0, \ \theta_0(x)=x 
$$
and $C,\lambda>0$ enjoy the same type of dependence as in \eqref{bound0} but importantly $\lambda$ now also depends on the maximal time considered. 
This means that the diffusion starting from $x$, oscillates around $\theta_t(x)$ at time $t$ with fluctuations of order $t^{-\frac{1}{2}}$.
Notice that if $b$ is bounded, then \eqref{Density_bounds0}
reduces to \eqref{bound0} since
$$
t^{-\frac 12}|x-y| -\|b\|_\infty t^{\frac 12}\le t^{-\frac 12}|\theta_t(x)-y|\leq t^{-\frac 12}|x-y| +\|b\|_\infty t^{\frac 12}.
$$ 
Hence, taking or not into consideration the flow does not give much additional information. The above control also clearly emphasizes 
why $C$ might depend on some maximal time interval considered. In the case where $b $ is bounded but not necessarily \textit{smooth}, 
the above bounds remain valid for \textit{any} regularizing flow. 

Diffusion with dynamics \eqref{SDE_0} and unbounded drifts appear in many applicative fields. Let us for instance mention the work \cite{gobe:02} which was concerned with issues related  to statistics of diffusions. We can also refer to \cite{page:panl:20} for the numerical approximation of ergodic diffusions.

In such frameworks,  estimates on the density and its derivatives are naturally required. 
Some  gradient estimates of the density were established in \cite{gobe:02}. The approach developed therein relies
on the Malliavin calculus. It  thus required some extra regularity on the drift. Also, since the deterministic flow was not taken into consideration an additional  penalizing exponential term in the right hand side\footnote{r.h.s. in short} of the bounds appeared. 
Similar features appeared in the work \cite{deck:krus:02} which established 
the existence of fundamental solutions for a strictly sublinear H\"older continuous drift.

The point of the current work is to establish estimates for the derivatives that reflect both the singularities associated with the differentiation in the parabolic setting, as in equation \eqref{bound0} above, and also reflect the key importance of the flow for unbounded drifts as it appears in the two-sided heat kernel estimate \eqref{Density_bounds0}. To the best of our knowledge, our is one of the first results for the derivatives of heat kernels with unbounded drifts.

We can actually address various frameworks. We manage to obtain two-sided heat kernel bounds for a H\"older continuous in space diffusion coefficient $\sigma $ in \eqref{SDE_0} and a drift $b$ which is uniformly bounded in time at the origin and has  linear growth in space (see assumptions {\bf (H$^\sigma_{\alpha}$)} and {\bf (H$^b_\beta$)} below). Importantly, when the drift $b$ is itself not smooth, the heat kernel bounds can be stated in the form \eqref{Density_bounds0} for any flow associated with a mollification of $b$. In particular, if the drift is continuous in space they actually hold for \textit{any} Peano flow. 
Those conditions are also sufficient to obtain gradient bounds w.r.t. the \textit{backward} variable $x$. To derive controls for the second order derivatives w.r.t. $x$ an additional spatial
H\"older continuity assumption naturally appears for the drift. Eventually, imposing some additional spatial smoothness on the diffusion coefficient, 
we also succeed to establish a gradient bound w.r.t. the \textit{forward} variable $y$.  

The paper is organized as follows. 
Our main results are stated in details in Section \ref{results}; 
Section \ref{MR_LIPSCHITZ_DRIFT} is dedicated to the proof of our main results when the coefficients satisfy our previous assumptions and are also \textit{smooth}. Importantly, we prove that the two-sided heat-kernel bounds do not depend on the smoothness of the coefficients but only on constants appearing in {\bf (H$^\sigma_{\alpha}$)} and {\bf (H$^b_\beta$)}, the fixed final time horizon $T>0$ and the dimension $d$. We also establish 
there bounds for the derivatives through Malliavin calculus techniques 
which is precisely possible because the coefficients are smooth. Those bounds serve as \textit{a priori} controls to derive in Section \ref{SEC_MT_CIRC}, through a circular type argument based on the \textit{Duhamel-parametrix type} representation of the density, that those bounds actually do not depend  on 
the smoothness of the coefficients. We then deduce the main results passing to the limit in a mollification procedure through convergence in law and compactness arguments. We eventually discuss in Section \ref{EXT} some possible extensions for the estimation of  higher order derivatives
of the heat kernel when the coefficients have some additional smoothness properties.

Let us mention that our approach developed here had previously been successfully used in \cite{chau:17} and \cite{Raynal2018SharpSE} to derive respectively the strong well-posedness of kinetic degenerate SDEs or Schauder estimates for degenerate Kolmogorov equations. It actually  seems sufficiently robust to be generalized, as soon as some suitable two sided bounds hold, in order to obtain estimates of the derivatives of the density.

\subsection{Assumptions and Main Results}\label{results} 

We make the following assumptions about $\sigma$ and $b$ in \eqref{SDE_0}.
\begin{enumerate}\label{Assumption1}
\item[{\bf (H$^\sigma_{\alpha}$)}] (Non degeneracy). There exists a positive constant $\kappa_0\ge 1 $, such that 
\begin{equation}\label{a1}
\kappa_0^{-1} |\x|^2\le \langle\s\s^{\ast}(t,x)\x,\x\rangle \le \kappa_0 |\x|^2, \quad x,\x\in \R^d, \; t\geq 0,
\end{equation}
and for some $\alpha\in(0,1)$,
\begin{equation}\label{a2}
|\s(t,x)-\s(t,y)|\le \kappa_0|x-y|^\alpha,\ \ t\geq 0,\ x,y\in\R^d.
\end{equation}
\item[{\bf (H$^b_\beta$)}] 
There exist positive constant $\kappa_1>0$ and $\beta\in[0,1]$ such that for all $x,y\in\R^d$ and $t\geq 0$,
\begin{equation}\label{a3}
|b(t,0)|\leq\kappa_1,\ |b(t,x)-b(t,y)|\leq\kappa_1(|x-y|^\beta\vee|x-y|).
\end{equation}
\end{enumerate}
It should be noticed that under {\bf (H$^b_0$)}, 
$b$ can possible be an {\it unbounded} measurable function with linear growth. For instance, $b(t,x)=x+b_0(t,x)$ with $b_0$ being bounded measurable satisfies
\eqref{a3}.
The drift $b(t,x)= c_1(t)+c_2(t)|x|^\beta,\ \beta\in [0,1] $ 
where $c_1,c_2 $ are bounded measurable  functions of time, also enters this class.
We mention that the second condition in \eqref{a3} could be stated only \textit{locally}, i.e. for e.g. $|x-y|\le 1 $. This could be checked throughout the proofs below, we prefer to keep it global for simplicity.

Moreover, under {\bf (H$^\sigma_{\alpha}$)} and {\bf (H$^b_0$)}, for
any $(s,x)\in \R_+\times \R^d$, it is well known that there exists a unique weak solution to \eqref{SDE_0} starting from $x$ at time $s$ (see e.g. \cite{stro:vara:79}, \cite{bass:perk:09}, \cite{dela:meno:10}, \cite{meno:11}). \\

For any $T\in(0,\infty]$ and $\eps\in[0,T)$, we write
$$
\mD^T_\eps:=\{(s,t)\in[0,\infty)^2: \eps<t-s<T\}.
$$

To state our main result, we need to prepare some deterministic regularized
flow associated with the drift $b$. Let $\rho$ be a nonnegative smooth function with support in the unit ball of $\R^d $ and such that $\int_{\mR^d}\rho(x)\dif x=1$. For $\eps\in(0,1]$, define
\begin{equation}\label{DEF_RHO_EPS_AND_MOLL_DRIFT}
\rho_\eps(x):=\eps^{-d}\rho(\eps^{-1}x),\ \ b_\eps(t,x):=b(t,\cdot)*\rho_\eps(x)=\int_{\mR^d} b(t,y)\rho_\eps(x-y)dy,
\end{equation}
i.e. $*$ stands for the usual spatial convolution.
Then for each $n=1,2,\cdots$, it is easy to see that
\begin{align}
|\nabla^n_x b_\eps(t,x)|&=\left|\int_{\mR^d}(b(t,y)-b(t,x))\nabla_x^n\rho_\eps(x-y)\dif y\right|\no
\\&\leq\int_{\mR^d}|b(t,y)-b(t,x)||\nabla_x^n\rho_\eps|(x-y)\dif y\no
\\&\leq\kappa_1\eps^\beta\int_{\mR^d}|\nabla_x^n\rho_\eps|(x-y)\dif y\leq  c\eps^{\beta-n}.\label{DA1}
\end{align}
On the other hand, from \eqref{a3} we also have
\begin{align}\label{DA2}
|b_\eps(t,x)-b(t,x)|\leq \int_{\mR^d}|b(t,y)-b(t,x)|\rho_\eps(x-y)\dif y\leq\kappa_1\eps^\beta.
\end{align}
For fixed $(s,x)\in \R_+ \times \R^d$, we denote by $\theta^{(\eps)}_{t,s}(x)$ the deterministic flow solving
\begin{equation}\label{flow}
\dot{\theta}^{(\eps)}_{t,s}(x)=b_\eps(t,\theta^{(\eps)}_{t,s}(x)), \; t\geq 0, \quad \theta^{(\eps)}_{s,s}(x)=x.
\end{equation}
Note that $(\theta^{(\eps)}_{t,s}(x))_{t\geq s}$ stands for a forward flow and $(\theta^{(\eps)}_{t,s}(x))_{t\leq s}$ stands for a backward flow.
Also, since we have regularized equation \eqref{flow} is well posed.

The following lemma, which provides a kind of equivalence between mollified flows, is our starting point for treating the unbounded rough drifts.
\begin{lemma}[Equivalence of flows]\label{Pr1}
Under {\bf (H$^b_0$)}, for any $\eps\in(0,1]$, the mapping $x\mapsto \theta^{(\eps)}_{t,s}(x)$ is a 
$C^\infty$-diffeomorphism and its inverse is given by $x\mapsto\theta^{(\eps)}_{s,t}(x)$.
Moreover, for any $T>0$,
there exists a constant $C=C(T,\kappa_1,d)\ge 1$ 
such that for any $\eps\in(0,1]$, all $|t-s|\leq T$ and $x,y\in\R^d$,
\begin{equation}\label{EQ1}
|\theta^{(1)}_{t,s}(x)-y|+|t-s|\asymp_C |\theta^{(\eps)}_{t,s}(x)-y|+|t-s|\asymp_C |x-\theta^{(\eps)}_{s,t}(y)|+|t-s|,
\end{equation}
where $Q_1\asymp_C Q_2$ means that $C^{-1}Q_2\leq Q_1\leq C Q_2$.
\end{lemma}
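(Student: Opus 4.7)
The diffeomorphism part of the statement is classical: since the mollified drift $b_\eps$ is $C^\infty$ in space with at most linear growth uniformly in time, standard ODE theory guarantees that \eqref{flow} admits a global flow depending smoothly on the initial datum, and the semigroup identity $\theta^{(\eps)}_{s,r}\circ\theta^{(\eps)}_{r,s}=\mathrm{id}$ immediately identifies $\theta^{(\eps)}_{s,t}$ as the inverse of $\theta^{(\eps)}_{t,s}$. The substantive part of the lemma is the pair of equivalences in \eqref{EQ1} with a constant $C$ independent of $\eps\in(0,1]$.

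The key observation I plan to exploit is that, although the pointwise Lipschitz constant of $b_\eps$ blows up like $\eps^{\beta-1}$ as $\eps\to 0$ by \eqref{DA1}, the \emph{modulus of continuity} of $b_\eps$ is inherited from that of $b$ uniformly in $\eps$: averaging the bound \eqref{a3} against $\rho_\eps$ gives
\begin{equation*}
|b_\eps(t,x) - b_\eps(t,y)| \leq \kappa_1(|x-y|^\beta \vee |x-y|) \leq \kappa_1(1 + |x-y|), \qquad \eps\in(0,1].
\end{equation*}
Every Grönwall estimate below will be driven by this inequality rather than by the Lipschitz constant of $b_\eps$.

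For the second equivalence in \eqref{EQ1}, I will fix $a, b\in\R^d$ and study the scalar quantity $u(r):=|\theta^{(\eps)}_{r,s}(a)-\theta^{(\eps)}_{r,s}(b)|$. The modulus bound above yields $u'(r)\leq\kappa_1(u(r)+1)$, so Grönwall gives $u(r)+1 \leq (u(s)+1)e^{\kappa_1|r-s|}$. Combined with the elementary inequality $e^x - 1 \leq x e^T$ for $x\in[0,T]$, a short rearrangement produces, for all $|t-s|\leq T$,
\begin{equation*}
C_T^{-1}(|a-b|+|t-s|) \leq |\theta^{(\eps)}_{t,s}(a)-\theta^{(\eps)}_{t,s}(b)| + |t-s| \leq C_T(|a-b|+|t-s|),
\end{equation*}
with $C_T = C_T(\kappa_1, T)$. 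Specializing to $a=x$ and $b=\theta^{(\eps)}_{s,t}(y)$, so that $\theta^{(\eps)}_{t,s}(b)=y$ by the diffeomorphism property just established, delivers the second equivalence of \eqref{EQ1}.

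For the first equivalence, I will compare the two flows directly. Setting $w(r):=|\theta^{(1)}_{r,s}(x)-\theta^{(\eps)}_{r,s}(x)|$ and splitting
\begin{equation*}
w'(r) \leq |b_1(r,\theta^{(1)}_{r,s}(x)) - b_\eps(r,\theta^{(1)}_{r,s}(x))| + |b_\eps(r,\theta^{(1)}_{r,s}(x)) - b_\eps(r,\theta^{(\eps)}_{r,s}(x))|,
\end{equation*}
the first term is uniformly bounded by $2\kappa_1$ via $|b_1 - b_\eps|\leq |b_1-b|+|b-b_\eps|\leq 2\kappa_1$ (cf. \eqref{DA2}), and the second by $\kappa_1(w(r)+1)$ through the modulus argument. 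Grönwall then yields $w(t)\leq C_T|t-s|$, and a reverse triangle inequality on $|\theta^{(1)}_{t,s}(x)-y|$ and $|\theta^{(\eps)}_{t,s}(x)-y|$ converts this into the first equivalence of \eqref{EQ1}. The main obstacle throughout is the uniformity of the constants in $\eps$, which is handled precisely by substituting the modulus of continuity for the non-uniform Lipschitz bound \eqref{DA1}.
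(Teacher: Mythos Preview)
Your proof is correct and follows the same overall architecture as the paper's: estimate the distance between flows by Gr\"onwall on the ODE and then convert to the stated equivalences via the triangle inequality and the inverse-flow identity. The one genuine difference is in how you obtain $\eps$-independent constants in the Gr\"onwall step. The paper pivots through the \emph{fixed} mollification $b_1$: it bounds $|b_1(r,\theta^{(\eps)}_{r,s}(x))-b_1(r,\theta^{(1)}_{r,s}(x))|$ by $\|\nabla b_1\|_\infty$ times the flow difference, and likewise uses the bi-Lipschitz property of $\theta^{(1)}$ to get the second $\asymp_C$ before transferring to general $\eps$ via the first. You instead observe that mollification preserves the modulus of continuity of $b$ uniformly in $\eps$, namely $|b_\eps(t,x)-b_\eps(t,y)|\leq\kappa_1(1+|x-y|)$, and run Gr\"onwall directly on $b_\eps$ for every $\eps$. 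This lets you prove the second equivalence in one stroke for all $\eps$ without passing through $\eps=1$. Both routes yield constants depending only on $(T,\kappa_1,d)$; yours is arguably more streamlined in that it does not single out any particular mollification level, while the paper's has the mild advantage that the Lipschitz constant $\|\nabla b_1\|_\infty$ gives a genuine bi-Lipschitz estimate on $\theta^{(1)}$ (without the $+|t-s|$ correction), which is occasionally convenient elsewhere in the paper.
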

\proof
By \eqref{DA1}, it is a classical fact that $x\mapsto \theta^{(\eps)}_{t,s}(x)$  is a $C^\infty$-diffemorpihsm 
and its inverse is given by $x\mapsto\theta^{(\eps)}_{s,t}(x)$.
Below, without loss of generality, we assume $s<t$. By \eqref{flow} and \eqref{DEF_RHO_EPS_AND_MOLL_DRIFT}, 
\eqref{DA1}, \eqref{DA2}, we have
\begin{align}
|\theta^{(\eps)}_{t,s}(x)-\theta^{(1)}_{t,s}(x)|
&\leq \int_s^t\left|b_\eps(r,\theta^{(\eps)}_{r,s}(x))-b_1(r,\theta^{(\eps)}_{r,s}(x)) \right|\dif r
+ \int_s^t\left|b_1(r,\theta^{(\eps)}_{r,s}(x))-b_1(r,\theta^{(1)}_{r,s}(x)) \right|\dif r\\
&\le 2\kappa_1(t-s) + \|\nabla b_1\|_\infty  \int_s^t|\theta^{(\eps)}_{r,s}(x)-\theta^{(1)}_{r,s}(x)|\dif r,
\end{align}
which implies by the Gronwall inequality that
$$
|\theta^{(\eps)}_{t,s}(x)-\theta^{(1)}_{t,s}(x)|\leq 2\kappa_1(t-s)\e^{\|\nabla b_1\|_\infty(t-s)}.
$$
Thus we obtain
$$
|\theta^{(1)}_{t,s}(x)-y|\le |\theta^{(\eps)}_{t,s}(x)-y|+2\kappa_1 \e^{\|\nabla b_1\|_\infty (t-s)}|t-s|.
$$
By symmetry, we obtain the first $\asymp_C$. For the second one, note that by the Gronwall inequality, 
\begin{align}\label{JG11}
|\theta^{(1)}_{t,s}(x)-\theta^{(1)}_{t,s}(y)|\asymp_{\e^{\|\nabla b_1\|_\infty(t-s)}}|x-y|\Rightarrow |\theta_{t,s}^{(1)}(x)-y|\asymp_{\e^{\|\nabla b_1\|_\infty(t-s)}} |x-\theta_{s,t}^{(1)}(y)|.
\end{align}
From this, by the first $\asymp_C$, we obtain the second $\asymp_C$.
\endproof

We introduce for notational convenience the following parameter set which gathers 
important quantities appearing in the assumptions:
\begin{equation}\label{DEF_THETA}
\Theta:=(T,\alpha,\beta,\kappa_0,\kappa_1,d),
\end{equation}
where again $T>0 $ stands for the fixed considered final time, $\alpha $ denotes the H\"older regularity index of the diffusion coefficient 
$\sigma$ (see \eqref{a2}), $\kappa_0 $ is the uniform ellipticity constant in \eqref{a1}, $\kappa_1 $ and $\beta $ are related to the behavior of the drift $b$ in \eqref{a3}, and $d$ is the current underlying dimension.\\

Our main result is the following theorem.
\begin{theorem}\label{th1}
Let $\alpha\in(0,1]$. Under {\bf (H$^\sigma_{\alpha}$)} and {\bf (H$^b_0$)}, 
for any $T>0$, $(s,t)\in\mD^T_0$ and $x\in\R^d$, the unique weak solution $X_{t,s}(x)$ of \eqref{SDE_0} starting from $x$ at time $s$ admits
a density $p(s,x,t,y)$ which is continuous in $x,y\in\mR^d$.
Moreover, $p(s,x,t,y)$ enjoys the following estimates:
\begin{enumerate}
\item [(i)] ({Two-sided density bounds}) There exist constants  $\lambda_0\in (0,1],\ C_0\ge 1$ depending on $\Theta$ such that 
for any $(s,t)\in\mD^T_0$ and $x,y\in \R^d$,
\begin{align}
C^{-1}_0g_{\lambda_0^{-1}}(t-s,\theta^{(1)}_{t,s}(x)-y)&\le p(s,x,t,y)
\le C_0g_{\lambda_0}(t-s,\theta^{(1)}_{t,s}(x)-y). \label{Density_bounds_THM}
\end{align}
\item [(ii)] (Gradient estimate in $x$) There exist constants $\lambda_1\in (0,1],\ C_1\ge 1$ depending on $\Theta$ such that 
for any $(s,t)\in\mD^T_0$ and $x,y\in \R^d$,
\begin{align}
\left|\nabla_xp(s,x,t,y) \right|&\le C_1(t-s)^{-\frac{1}{2}}g_{\lambda_1}(t-s,\theta^{(1)}_{t,s}(x)-y). \label{Derivatives_bounds}
\end{align}
\item[(iii)] (Second order derivative estimate in $x$) If {\bf (H$^b_\beta$)} holds for some $\beta\in(0,1]$, then 
there exist constants $\lambda_2, C_2>0$ depending on $\Theta$ such that for any $(s,t)\in\mD^T_0$ and $x,y\in \R^d$,
\begin{align}
\left|\nabla^2_xp(s,x,t,y) \right|&\le C_2(t-s)^{-1}g_{\lambda_2}(t-s,\theta^{(1)}_{t,s}(x)-y). \label{Second_bounds}
\end{align}
\item [(iv)] (Gradient estimate in $y$) If {\bf (H$^b_\beta$)} holds for some $\beta\in(0,1]$ and for some $\alpha\in(0,1)$ and $\kappa_2>0$,
\begin{equation}\label{BD_DINI_GRAD_SIGMA}
\|\nabla\sigma\|_\infty\leq\kappa_2,\ \ |\nabla\sigma(t,x)-\nabla\sigma(t,y)|\leq \kappa_2|x-y|^{\alpha}, 
\end{equation}
then there exists constants $\lambda_3\in (0,1],\ C_3\ge 1$ depending on $\Theta$ and $\kappa_2$ such that 
for any $(s,t)\in\mD^T_0$ and $x,y\in \R^d$,
\begin{align}
\left|\nabla_yp(s,x,t,y) \right|&\le C_3(t-s)^{-\frac{1}{2}}g_{\lambda_3}(t-s,\theta^{(1)}_{t,s}(x)-y). \label{Derivatives_bounds2}
\end{align}
\end{enumerate}
\end{theorem}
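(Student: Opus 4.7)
The plan is to work first with mollified, smooth coefficients $(\sigma^{(\delta)},b_\eps)$ for which the SDE \eqref{SDE_0} admits a smooth density $p^\eps$, prove the four estimates \eqref{Density_bounds_THM}--\eqref{Derivatives_bounds2} with constants depending only on the parameter set $\Theta$ from \eqref{DEF_THETA} and \emph{not} on the mollification scales, and then recover Theorem \ref{th1} by passing to the limit. The passage to the limit combines tightness of the laws (from the uniform linear-growth bound in \eqref{a3}) with the uniform pointwise estimates on $p^\eps$: the latter yield compactness of the density family in $C_{\loc}$ and identify any limit point as the density of the unique weak solution to \eqref{SDE_0} under \textbf{(H$^\sigma_{\alpha}$)} and \textbf{(H$^b_0$)}.

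For the two-sided bounds (i), I would run a Duhamel--parametrix expansion based on a Gaussian proxy which accounts for the unbounded drift by being centered along the deterministic flow. Concretely, for a frozen time-space pivot I take as proxy the Gaussian SDE with coefficients frozen along $r\mapsto\theta^{(\eps)}_{r,s}(x)$, whose transition density is Gaussian with mean $\theta^{(\eps)}_{t,s}(x)$ and covariance equivalent to $(t-s)I_d$ up to the ellipticity constant $\kappa_0$, so that it already has the shape of \eqref{Density_bounds_THM}. The parametrix kernel $H$ involves the discrepancies $\sigma^{(\delta)}(r,z)-\sigma^{(\delta)}(r,\theta^{(\eps)}_{r,s}(x))$ and $b_\eps(r,z)-b_\eps(r,\theta^{(\eps)}_{r,s}(x))$; the former is controlled by \eqref{a2} and produces the usual singularity of order $(t-r)^{-1+\alpha/2}$, while the latter, under \eqref{a3}, is dominated by $|z-\theta^{(\eps)}_{r,s}(x)|$ which, when integrated against the Gaussian proxy, yields a gain of $(t-r)^{1/2}$. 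Summing iterated kernels closes the upper bound; the lower bound is obtained through an Aronson-type chaining along the flow combined with a contradiction argument against the upper bound on a diagonal small interval. Lemma \ref{Pr1} then replaces $\theta^{(\eps)}$ by $\theta^{(1)}$ at the cost of a constant depending only on $\Theta$.

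For the derivatives (ii)--(iii), I would use a circular argument. Smoothness of the mollified coefficients gives, via a Bismut--Elworthy--Li type formula from Malliavin calculus, \emph{some} pointwise gradient and Hessian bounds on $p^\eps$ of the correct Gaussian form, but with constants a priori blowing up as $\eps\to 0$. These a priori bounds are used solely to justify differentiating the Duhamel identity; taking $\nabla_x^j$ in that identity (with $j=1,2$) puts the derivatives on the Gaussian proxy, for which they are controlled by $(t-s)^{-j/2}g_\lambda(t-s,\theta^{(\eps)}_{t,s}(x)-y)$, leaving a residual convolution $\int_s^t\!\int\nabla_x^jp^\eps(s,x,r,z)\,H(r,z,t,y)\,\dif z\,\dif r$ whose kernel keeps the time-integrable singularity $(t-r)^{-1+\alpha/2}$ (with an extra $(t-r)^{\beta/2}$ gain exploited for $j=2$, which is exactly why \textbf{(H$^b_\beta$)} with $\beta>0$ is required for the Hessian bound). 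A Gronwall-type iteration in the resulting inequality then closes the estimate with constants depending only on $\Theta$.

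For the forward gradient (iv), differentiating in $y$ forces $\nabla_y$ onto the parametrix kernel $H$, which in turn differentiates the H\"older discrepancy of $\sigma$ and is the reason why the hypothesis \eqref{BD_DINI_GRAD_SIGMA} is imposed. I would accordingly write a backward parametrix representation of $p^\eps$ based on a proxy frozen along the backward flow issued from the terminal point, and reproduce the circular argument of the previous paragraph with $\nabla_y$ in place of $\nabla_x$. The main obstacle throughout is precisely this circular closure: one must carefully track dependencies to guarantee that the a priori $\eps$-dependent bounds do \emph{not} enter the final constants, and one must ensure the Duhamel kernel remains time-integrable after each differentiation, which is why assumptions \eqref{a2}, \textbf{(H$^b_\beta$)} with $\beta>0$, and \eqref{BD_DINI_GRAD_SIGMA} appear progressively as one asks for more derivatives.
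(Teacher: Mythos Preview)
Your overall architecture---mollify, prove uniform-in-$\eps$ bounds via parametrix plus a circular argument, pass to the limit by compactness---is exactly the paper's. But several load-bearing technical steps are glossed over in ways that would not close.

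The most serious gap is in (i). You write that ``summing iterated kernels closes the upper bound''. With an unbounded drift this is precisely what \emph{fails}: each iteration of the convolution kernel $H$ degrades the Gaussian concentration parameter (in the paper's notation, $\lambda_N\to 0$ as $N\to\infty$ in Lemma \ref{Le25_NEW}), so the series does not sum to a Gaussian of fixed concentration. The paper instead truncates the expansion at a finite $N$ chosen so that $-1+\tfrac{N\alpha}{2}>\tfrac d2$ and controls the remainder $p\otimes H^{\otimes N}$ by a completely different device: the Bou\'e--Dupuis variational formula (Theorem \ref{Th1} and Lemma \ref{Le23}), which gives a pointwise exponential bound on $\mE\,\ell(X_{t,s}(x))$ in terms of the deterministic flow. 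Without this stochastic-control step (or an equivalent substitute) the upper bound does not close.

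For the circular arguments (ii)--(iv), your description is close in spirit but misses a device the paper relies on: rather than dominating everything by a fixed Gaussian ${\bf g}_\lambda$, the paper compares against an auxiliary heat kernel $\bar p_\delta$ (the density of the SDE with $\sigma=\delta I$) which satisfies the \emph{exact} Chapman--Kolmogorov identity \eqref{CK_PROP_FOR_BAR_P}. This is what allows the Gronwall inequality for $f_j(s)=\sup_{x,y}(t-s)^{j/2}|\nabla^j_x p|/\bar p$ to close with constants depending only on $\Theta$; a raw Gaussian majorant would again leak concentration at each convolution. Also, for $j=2$ the paper does not simply differentiate the forward Duhamel formula: it switches to the backward representation \eqref{D0}, splits the time interval at $(s+t)/2$, and on the far half further decomposes $A^{(s,x)}$ into a mollified part and a remainder of size $\eps^\alpha$ with $\eps=(t-r)^{1/2}$, so as to integrate by parts and trade a $\nabla^2_z p$ for $\nabla_z p$. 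Your ``extra $(t-r)^{\beta/2}$ gain'' is not how the diffusion part is handled; $\beta>0$ enters only through the drift term $I_5$.

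Finally, the limit step needs more than uniform pointwise bounds: to apply Ascoli--Arzel\`a one needs \emph{equicontinuity} of $\nabla_x p_\eps$, $\nabla^2_x p_\eps$, $\nabla_y p_\eps$ in $(x,y)$, uniformly in $\eps$. The paper devotes Appendix \ref{EQ_CONT_ASCOLI} to this, via a representation with two different freezing points and a H\"older estimate that again depends only on $\Theta$. You should flag this as a separate task rather than fold it into ``compactness''.
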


\br\rm
By Lemma \ref{Pr1}, the above $\theta^{(1)}_{t,s}(x)$ can be replaced by any regularized flow $\theta^{(\eps)}_{t,s}(x)$. 
Importantly, 
%if $b $ satisfies {\bf (H$^b_\beta$)} and is also continuous, 
if $b $ satisfies {\bf (H$^b_\beta$)} for some $\beta\in(0,1]$, 
then $\theta^{(1)}_{t,s}(x)$
can be replaced as well by \textbf{any} Peano flow solving $\dot \theta_{t,s}(x)=b(t,\theta_{t,s}(x)),\ \theta_{s,s}(x) =x$. Indeed, it is plain to check 
that, in this case, the result of Lemma \ref{Pr1} still holds with $\theta_{t,s}(x)$ instead of $\theta^{(\eps)}_{t,s}(x)$.
\er

\br\rm
Under the assumptions of the theorem, in fact, we can show the H\"older continuiy of
$\nabla_x p$, $\nabla^2_x p$ and $\nabla_y p$ in the variables $x$ and $y$ (see Appendix \ref{EQ_CONT_ASCOLI}).
\er

%\br\rm
%Let us point out that, when $b=b^{1}+b^2 $, where $b^1 $ satisfies {\bf (H$^b_0$)} and $b^2 \in L^q([0,T],L^p)$ with $\frac{d}{p}+\frac{2}{q}<1$, following the Zvonkin transform approach developed in \cite{zhan:zhao:18}, it can be established that the two-sided estimates \eqref{Density_bounds_THM} and the gradient estimate \eqref{Derivatives_bounds} w.r.t. $x$ still hold true. They would hold as well for certain distributions. We refer to Theorem 5.1 of the quoted reference for additional details. 
%\er

\subsection{Notations} In the following we will denote by $\langle \cdot,\cdot\rangle$ and $|\cdot| $ the Euclidean scalar product and norm on $\R^d$. We will use the notation $\nabla ,\nabla^2$ to denote respectively the gradient and Hessian matrix of real valued smooth functions on $\R^d $. By extension, we denote by $\nabla^j $ the $j^{\rm th}$ order derivative of a smooth function.
We use the letter $C$ for constants that may depend on the parameters in 
$\Theta =(T,\alpha,\beta,\kappa_0,\kappa_1,d)$ introduced in \eqref{DEF_THETA}. We reserve the notation $c$ for constant which only depend on the quantities $\alpha,\beta,\kappa_0,\kappa_1,d$ 
but  not on $T$. Both type of constants $c,C$ might change from line to line. Other possible dependencies are explicitly indicated when needed. Eventually, we will frequently use the notation $\lesssim$. For  two quantities $Q_1$ and $Q_2$, we mean by $Q_1\lesssim Q_2 $ that there exists $C$ with the same previously described dependence such that $Q_1\le CQ_2 $.

\section{A priori heat kernel estimates for SDEs with smooth coefficients}
\label{MR_LIPSCHITZ_DRIFT} 

\iffalse
For technical reasons, we introduce now assumption \A{A${}_L $} which gathers Assumption \ref{Assumption1}, the statement of Assumption \ref{Assumption2} on $\sigma $ but for which we assume that the drift $F$ is \textbf{globally Lipschitz continuous in space}. 
In particular, under \A{A${}_L $}, the martingale problem associated with the operator
\begin{equation}
\label{gen}
\mathcal{L}_{t,x}=\langle b(t,x),\nabla\rangle+\frac 12{\rm{Tr}}\Big( \sigma\sigma^*(t,x)\nabla^2\Big)
\end{equation}
is well posed
and equation \eqref{SDE_0} admits from \cite{dela:meno:10}, \cite{meno:11} a density which satisfies the two-sided Gaussian estimates of \eqref{Density_bounds0}.
\fi

In this section we suppose  {\bf (H$^\sigma_{\alpha}$)} and {\bf (H$^b_\beta$)}, and consider the mollified $b_\eps$ and $\sigma_\eps$.
In particular, we have

\begin{align}\label{DA4}
\kappa^{(\eps)}_j:=\sum_{k=1,\cdots,j}\left(\|\nabla^k_x b_\eps\|_\infty+\|\nabla_x^k \sigma_\eps\|_\infty\right)<\infty,\ \ \forall j\in\mN.
\end{align}
In the following,  for simplicity of notations, we shall drop the subscripts $\eps$. In other words, we assume
\leqnomode
\begin{align}
\let\veqno\eqno\label{ASS}\tag{\textbf{S}}
\mbox{$b$ and $\sigma$ satisfy {\bf (H$^\sigma_{\alpha}$)}, {\bf (H$^b_\beta$)} and \eqref{DA4}}.
\end{align}
Under \eqref{ASS}, it is well known that for each $(s,x)\in\R_+\times\R^d$, the following SDE has a unique strong solution:
\reqnomode
\begin{align}\label{SDE_1}
\dif X_{t,s}=b(t, X_{t,s})\dif t+\sigma(t, X_{t,s})\dif W_t,\ \ X_{s,s}=x,\ \ t\geq s.
\end{align}
The following theorem is well known and more or less standard in the theory of the Malliavin calculus.
We refer to \cite{Nu}, \cite[Remarks 2.1 and 2.2]{WZ} or \cite[Theorem 5.4]{Z16} for more details. 
\bt\label{Den}
Under \eqref{ASS}, for any $j,j'\in\mN$, $p>1$ and $T>0$, there is a constant $C=C(\Theta, j,j', \kappa_{j+j'})$ such that for all $(s,t)\in\mD^T_0$, $x\in\mR^d$ and $f\in C^\infty_b(\mR^d)$,
\begin{align}\label{GRA}
|\nabla^j \mE(\nabla^{j'}f(X_{t,s}(x)))|\leq C_p(t-s)^{-(j+j')/2}(\mE|f(X_{t,s}(x)|^p)^{1/p}.
\end{align}
In particular, $X_{t,s}(x)$ has a density $p(s,x,t,y)$, which is smooth in $x,y$.
\et

\br\rm
By It\^o's formula, one sees that $p(s,x,t,y)$ satisfies the backward Kolmogorov equation
\begin{equation}\label{KOLM_DIFF}
\p_s p(s,x,t,y)+\cL_{s,x}p(s,x,t,y)=0,\ {p}(s,\cdot,t,y)\longrightarrow \d_y(\cdot)\; \text{weakly as}\;  s\uparrow t,
\end{equation}
and the forward Kolmogorov equation (Fokker-Planck equation):
\begin{equation}\label{FP_EQ_DIFF_SMOOTH_COEFF}
\p_t p(s,x,t,y)\textcolor{blue}{-}\cL^*_{t,y}p(s,x,t,y)=0,\ {p}(s,x,t,\cdot)\longrightarrow \d_x(\cdot)\; \text{weakly as}\;  t\downarrow s,
\end{equation}
where, setting $a=\sigma\sigma^*/2$,
$$
\mathcal{L}_{s,x}f(x)= {\rm Tr}\big (a(s,x)\nabla_x^2 f(x)\big)+\langle b(s,x),\nabla_x f(x)\rangle
$$
and
$$
\mathcal{L}^*_{t,y}f(y)=\p_{y_i}\p_{y_j}(a_{ij}(t,y) f(y))-\div( b(t,\cdot) f)(y).
$$
Here, we use the usual Einstein convention 
for the adjoint operator.
\er
\subsection{The Duhamel representation for $p(s,x,t,y)$}
\label{SEC_FROZEN_PLUS_DUHAMEL}
Fix now $(\t,\xi)\in \R_+\times \R^d $ as \textit{freezing parameters} to be chosen later on.
Let $ \tilde{X}^{(\t,\xi)}_{t,s}(x)$ denote the process starting at $x$ at time $s$, with dynamics
\begin{equation}\label{SDE_FROZEN}
d\tilde{X}_{t,s}^{(\t,\xi)}=b(t,\theta_{t,\t}(\xi))\dif t+\s(t,\theta_{t,\t}(\xi))\dif W_t, \; t\geq s,\ \tilde{X}_{s,s}^{(\t,\xi)}=x,
\end{equation} 
i.e. $\tilde{X}_{t,s}^{(\t,\xi)}$ denotes the process derived from \eqref{SDE_1}, 
when freezing the spatial coefficients along the flow $\theta_{\cdot,\t}(\xi)$, where $\theta_{\cdot,\t}(\xi)$ 
is the unique solution of ODE \eqref{flow} corresponding to $b$.
It is clear that, for any choice of freezing couple $(\t,\xi) $, $\tilde X^{(\t,\xi)}_{t,s}$ has a 
Gaussian  density 
\begin{align}\label{GAU}
\tilde{p}^{(\t,\xi)}(s,x,t,y)=\frac{\exp\{-\<(\cC^{(\t,\xi)}_{t,s})^{-1}(\vartheta^{(\t,\xi)}_{t,s}+x-y), 
\vartheta^{(\t,\xi)}_{t,s}+x-y\>/2\}}{\sqrt{(2\pi)^d\det (\cC^{(\t,\xi)}_{t,s})}},
\end{align}
where
\begin{equation}\label{MEAN_COV_AL}
\vartheta^{(\t,\xi)}_{t,s}:=\int_s^t b(r,\theta_{r,\t}(\xi))\dif r, \quad 
\cC^{(\t,\xi)}_{t,s}:=\int_s^t\s\s^{\ast}(r,\theta_{r,\t}(\xi))\dif r.
\end{equation}
In particular,  $\tilde{p}^{(\t,\xi)}(s,x,t,y)$ satisfies  for fixed $(t,y)\in \R_+\times \R^d $:
\begin{equation}\label{Kbis_AL}
\p_s\tilde{p}^{(\t,\xi)}(s,x,t,y)+\mathcal{L}^{(\t,\xi)}_{s,x} \tilde{p}^{(\t,\xi)}(s,x,t,y)=0, \; (s,x)\in [0,t)\times \R^d, 
\end{equation}
subjected to the final condition
\begin{equation}\label{DIRAC_FROZEN}
\tilde{p}^{(\t,\xi)}(s,\cdot,t,y)\longrightarrow \d_y(\cdot)\; \text{weakly as}\;  s\uparrow t,
\end{equation}
where 
\begin{equation}\label{frozen_gen}
\mathcal{L}^{(\t,\xi)}_{s,x}={\rm{Tr}}\big(a(s,\theta_{s,\tau}(\xi))\cdot\nabla^2_x\big)+\langle b(s,\theta_{s,\t}(\xi)),\nabla_x\rangle
\end{equation}
denotes the generator of the diffusion with frozen coefficients in \eqref{SDE_FROZEN}.

The following lemma is direct by the explicit representation \eqref{GAU}, the uniform ellipticity condition \eqref{a1} and the chain rule.
\bl[A priori controls for the frozen Gaussian density]\label{Le21}
For any $j=0,1,2,\cdots$, there exist constants $\lambda_j, C_j>0$ depending only on $j,\kappa_0, d$
such that  for all $(\t,\xi)\in \R_+\times\R^d$, $(s,t)\in\mD^\infty_0$ and $x,y\in\R^d$,
\begin{align}\label{Low}
\tilde{p}^{(\t,\xi)}(s,x,t,y)&\ge C_0g_{\lambda_0^{-1}}\big(t-s, \vartheta^{(\t,\xi)}_{t,s}+x -y\big),
\end{align} 
and
\begin{align}\label{derivative_gaussian_AL}
\left|\nabla_x^j \tilde{p}^{(\t,\xi)}(s,x,t,y)\right|=
\left|\nabla_y^j \tilde{p}^{(\t,\xi)}(s,x,t,y)\right|&\le C_j(t-s)^{-\frac{j}{2}}g_{\lambda_j}\big(t-s, \vartheta^{(\t,\xi)}_{t,s}+x -y\big).
\end{align} 
Moreover, for each $j,j'\in\mN$, there are constants $C',\lambda'$ depending on $\Theta$ and $\kappa_{j'}$ such that
\begin{align}\label{D9}
\left| \nabla^j_x\nabla_\xi^{j'}\tilde{p}^{(\t,\xi)}(s,x,t,y)\right|
&\le C'(t-s)^{-\frac{j}{2}}g_{\lambda'}\big(t-s, \vartheta^{(\t,\xi)}_{t,s}+x -y\big).
\end{align} 
\el
\begin{proof}
We focus on \eqref{D9} for which it suffices to note that for any $k\in\mN$, $T>0$,
$$
|\nabla^k_\xi\vartheta^{(\t,\xi)}_{t,s}|+|\nabla^k_\xi\cC^{(\t,\xi)}_{t,s}|\leq C_k|t-s|,\ \ (s,t)\in\mD^T_0,\ (\tau,\xi)\in[s,t]\times\R^d,
$$
where the constant $C_k$ depends on the bound of $\nabla^jb$ and $\nabla^j\sigma$,  $j=1,\cdots, k$.
\end{proof}
The starting point of our analysis is the following Duhamel type representation formula which readily follows in the current \textit{smooth coefficients} setting from \eqref{KOLM_DIFF}-\eqref{FP_EQ_DIFF_SMOOTH_COEFF} and \eqref{Kbis_AL}-\eqref{DIRAC_FROZEN}:
\begin{align}
p(s,x,t,y)&=\tilde{p}^{(\t,\xi)}(s,x,t,y)+\int^t_s\int_{\R^d}\tilde{p}^{(\t,\xi)}(s,x,r,z)\cA^{(\tau,\xi)}_{r,z}p(r,z,t,y)\dif z\dif r\label{D0}
\\&=\tilde{p}^{(\t,\xi)}(s,x,t,y)+\int^t_s\int_{\R^d}p(s,x,r,z)\cA^{(\tau,\xi)}_{r,z}{\tilde p}^{(\t,\xi)}(r,z,t,y)\dif z\dif r,\label{D11}
\end{align}
where
\begin{align}\label{ABBR_2}
\cA^{(\tau,\xi)}_{r,z}:=\cL_{r,z}-\cL^{(\tau,\xi)}_{r,z}=\tr(A^{(\tau,\xi)}_{r,z}\cdot\nabla^2_z)+B^{(\tau,\xi)}_{r,z}\cdot\nabla_z
\end{align}
and
\begin{align}\label{ABBR_1}
A^{(\tau,\xi)}_{r,z}:=a(r,z)-a(r,\theta_{r,\tau}(\xi)),\ \ B^{(\tau,\xi)}_{r,z}:=b(r,z)-b(r,\theta_{r,\tau}(\xi)).
\end{align}
If we take $(\tau,\xi)=(s,x)$ in \eqref{D0} and set $\tilde p_0(s,x,t,y):=\tilde{p}^{(s,x)}(s,x,t,y)$, 
then we obtain the {\it backward} representation
$$
p(s,x,t,y)=\tilde{p}_0(s,x,t,y)+\int^t_s\int_{\R^d}\tilde{p}_0(s,x,r,z)\cA^{(s,x)}_{r,z}p(r,z,t,y)\dif z\dif r, 
$$
and in this case
\begin{align}\label{AQ1}
\vartheta^{(s,x)}_{t,s}+x-y=\int_s^t b(r,\theta_{r,s}(x))\dif r+x-y=\theta_{t,s}(x)-y;
\end{align}
it involves the \textit{forward} deterministic flow $\theta_{t,s}(x) $ in the frozen Gaussian density.
If we now take $(\tau,\xi)=(t,y)$ in \eqref{D11} and set $\tilde p_1(s,x,t,y):=\tilde{p}^{(t,y)}(s,x,t,y)$,  
we then obtain the {\it forward} representation
\begin{align}\label{D1}
p(s,x,t,y)=\tilde{p}_1(s,x,t,y)+\int^t_s\int_{\R^d}p(s,x,r,z)\cA^{(t,y)}_{r,z}{\tilde p}_1(r,z,t,y)\dif z\dif r,
\end{align}
and in this case
\begin{align}\label{AQ2}
\vartheta^{(t,y)}_{t,s}+x-y=\int_s^t b(r,\theta_{r,t}(y))\dif r+x-y=x-\theta_{s,t}(y).
\end{align}
It involves the \textit{backward} deterministic flow $\theta_{s,t}(y) $ in the frozen Gaussian density.

\subsection{Two-sided Estimates for the heat kernel}
\label{SMOOTH_HK}

We first deal here with the two-sided estimates for the density in the current \textit{smooth coefficients} setting. Importantly, we emphasize as much as possible that all the controls obtained are actually \textit{independent} of the derivatives of the coefficients, or even of the continuity of the drift $b$, but only depend on the parameters gathered in $\Theta $ introduced in \eqref{DEF_THETA}. We first iterate in Section \ref{SEC_PARAM_TWO_SIDED_HK} the Duhamel representation \eqref{D1} (\textit{forward case}) to obtain the so-called \textit{parametrix series} expansion of the density. We then give some controls related to the smoothing effects in time of the parametrix kernel. A specific feature of the heat kernels associated with unbounded drifts is that the corresponding 
parametrix series needs to be handled with care. Indeed, it is not direct to prove that it converges and some truncation step is needed. This fact was already observed in \cite{dela:meno:10} and we use here a similar kind of argument based on slightly different techniques deriving from the stochastic control representation of some Brownian functionals, see \cite{boue:dupu:98}, \cite{zhan:09} and Section \ref{CTR_STO} below.

We can assume here without loss of generality that $T\le 1$.  
Indeed, once the two-sided estimates are established in this case, they can be easily extended to any compact time interval $[0,T] $
through Gaussian convolutions using 
the scaling properties (see Lemma \ref{Le42}).

\subsubsection{Two-sided heat kernel estimates parametrix series}\label{SEC_PARAM_TWO_SIDED_HK}
For notational convenience, we write from now on for $(s,t)\in\mD^T_0$ and $x,y\in\R^d$,
\begin{equation}\label{DEF_H}
\tilde{p}_1(s,x,t,y)=\tilde{p}^{(t,y)}(s,x,t,y),\ \ H(s,x,t,y):=\cA^{(t,y)}_{s,x}{\tilde p}_1(s,x,t,y),
\end{equation}
and
\begin{equation}\label{DE1}
p\otimes H(s,x,t,y)=\int_s^t\int_{\R^d}p(s,x,r,z)H(r,z,t,y)\dif z\dif r.
\end{equation}
Thus, from the Duhamel representation \eqref{D1}, we have 
\begin{align}
p(s,x,t,y)=\tilde{p}_1(s,x,t,y)+(p\otimes H)(s,x,t,y).\label{FR}
\end{align}
For $N\geq 2$, by iterating $N-1$-times the identity \eqref{FR}, we obtain
\begin{align}
p(s,x,t,y)=\tilde{p}_1(s,x,t,y)+\sum_{j=1}^{N-1}(\tilde p_1\otimes H^{\otimes j})(s,x,t,y)+(p\otimes H^{\otimes N})(s,x,t,y).\label{FR1}
\end{align}

We shall now use the following notational convention without mentioning the flow $\theta^{(1)}_{t,s}(x)$. 
For $(s,t)\in \mathbb D_0^T $, $x,y\in \R^d $, we define for $\lambda>0 $:
\begin{align}\label{AQ74}
{\bf g}_\lambda(s,x,t,y):=g_{\lambda}\big(t-s, \theta^{(1)}_{t,s}(x) -y\big)=(t-s)^{-\frac{d}{2}}\exp\big(-\lambda |\theta^{(1)}_{t,s}(x)-y|^2/(t-s)\big),
\end{align}
recalling \eqref{GG1} for the last equality. 
We therefore derive from 
%Lemma \ref{Pr1} 
{Lemmas \ref{Le21} and \ref{Pr1}}
the following lemma.
\bl\label{Le24}
For any $T>0$ and $j=0,1,2,\cdots$, there exist constants $\tilde \lambda_j, \tilde C_j>0$ depending only on $\Theta$
such that  for all $(s,t)\in\mD^T_0$ and $x,y\in\R^d$,
\begin{align}\label{Low_bis}
\tilde{p}_1(s,x,t,y)&\ge \tilde C_0{\bf g}_{\tilde \lambda_0^{-1}}(s,x,t,y),
\end{align} 
and for all $\alpha\in [0,1] $,
\begin{align}\label{FIRST_CTR_FROZEN}
|x-\theta_{s,t}(y)|^\alpha\left|\nabla_x^j \tilde{p}_1(s,x,t,y)\right|
&\le \tilde C_j(t-s)^{\frac{\alpha}2-\frac{j}{2}}{\bf g}_{\tilde \lambda_j}(s,x,t,y).
\end{align} 
\el
The following convolution type inequality is also an easy consequence of Lemma \ref{Pr1}.
\bl\label{CKE}
For any $T>0$, there is an $\eps=\eps(\Theta)\in(0,1)$ such that for any $\lambda>0$, there is a $C_\eps=C_\eps(\Theta,\lambda)>0$ such that 
for all $(s,t)\in\mD^T_0$, $r\in[s,t]$ and $x,y\in\mR^d$,
\begin{align}\label{CKE0}
\int_{\mR^d} {\bf g}_\lambda(s,x,r,z){\bf g}_\lambda(r,z,t,y)\dif z\leq C_\eps {\bf g}_{\eps\lambda}(s,x,t,y).
\end{align}
\el
\begin{proof}
By definition and Lemma \ref{Pr1}, we have for some $0<\eps<\eps'<1$,
\begin{align*}
\int_{\mR^d} {\bf g}_\lambda(s,x,r,z){\bf g}_\lambda(r,z,t,y)\dif z
&=\int_{\mR^d}g_{\lambda}\big(r-s, \theta^{(1)}_{r,s}(x) -z\big)g_{\lambda}\big(t-r, \theta^{(1)}_{t,r}(z) -y\big)\dif z
\\&\lesssim\int_{\mR^d}g_{\eps'\lambda}\big(r-s, \theta^{(1)}_{r,s}(x) -z\big)g_{\eps'\lambda}\big(t-r, z-\theta^{(1)}_{r,t}(y)\big)\dif z
\\&=C g_{\eps'\lambda}\big(t-s, \theta^{(1)}_{r,s}(x) -\theta^{(1)}_{r,t}(y)\big)\lesssim {\bf g}_{\eps\lambda}(s,x,t,y),
\end{align*}
where the second equality is due to the Chapman-Kolmogorov (C-K in short) property  for the Gaussian semigroup, 
and the last inequality again follows from Lemma \ref{Pr1} and the following control
\begin{align}\label{AS0}
|\theta^{(1)}_{t,s}(x)-y|=|\theta^{(1)}_{t,r}\circ\theta^{(1)}_{r,s} (x)-\theta^{(1)}_{t,r}\circ\theta^{(1)}_{r,t}(y)|\lesssim|\theta^{(1)}_{r,s}(x) -\theta^{(1)}_{r,t}(y)|.
\end{align}
The proof is complete.
\end{proof}

The next lemma is crucial since it actually allows to control the iterated convolutions of the parametrix kernel $H$ which appears  in the expansion \eqref{FR1}.
\bl[Control of the iterated parametrix kernel]\label{Le25_NEW}
Under {\bf (H$^\sigma_{\alpha}$)} and {\bf (H$^b_0$)}, for any $T>0$ and $N\in\mN$,
there are constants $C_N,\lambda_N>0$ depending only on $\Theta$ such that for all $(s,t)\in\mD^T_0$ and $x,y\in\R^d$,
$$
|H^{\otimes N}(s,x,t,y)|\leq C_N(t-s)^{-1+\frac{N\alpha}{2}} {\bf g}_{\lambda_N}(s,x,t,y),
$$
where $\lambda_N\to 0$ as $N\to\infty$.
\el
\begin{proof}
By the definition of $H$ in \eqref{DEF_H}, \eqref{ABBR_1}, Lemma \ref{Le24} and \eqref{EQ1}, 
there exists $\lambda:=\lambda (\Theta)>0$ and $C:=C(\Theta)$ s.t.
\begin{align}
|H(s,x,t,y)|
&\leq|a(s,x)-a(s,\theta_{s,t}(y))|\cdot
|\nabla^2_x{\tilde p}_1(s,x,t,y)|+|b(s,x)-b(s,\theta_{s,t}(y))|\cdot|\nabla_x{\tilde p}_1(s,x,t,y)|
\\&\lesssim|x-\theta_{s,t}(y)|^\alpha|\nabla^2_x{\tilde p}_1(s,x,t,y)|+(1+|x-\theta_{s,t}(y)|)|\nabla_x{\tilde p}_1(s,x,t,y)|\\
&\lesssim (t-s)^{-1+\frac \alpha 2}{\bf g}_{\lambda}(s,x,t,y).\label{FIRST_CTR_H}
\end{align}
This gives the stated estimate for $N=1$. 
From \eqref{FIRST_CTR_H} and by Lemma \ref{CKE},
it is readily seen that:
\begin{align}
|H^{\otimes 2}(s,x,t,y)|&
\lesssim\left(\int_s^t  (r-s)^{-1+\frac \alpha 2}(t-r)^{-1+\frac \alpha 2}\dif r\right) {\bf g}_{\eps\lambda}(s,x,t,y)
\lesssim(t-s)^{-1+\alpha} {\bf g}_{\eps\lambda}(s,x,t,y).
\end{align}
For general $N\geq 2$, by direct induction we have
\begin{align}
|H^{\otimes N}(s,x,t,y)|& \le C_N(t-s)^{-1+\frac{N\alpha}2} {\mathbf g}_{\eps^{N-1}\lambda}(s,x,t,y).
\end{align}
The proof is complete. 
\end{proof}

 From the above lemma, \eqref{FR1} and \eqref{FIRST_CTR_FROZEN}, we thus derive that for all $N\in \N $, $(s,t)\in \mathbb D_0^T,\ x,y\in \R^d $:
 \begin{align}
p(s,x,t,y)&\le \bar C {\mathbf g}_{\lambda_{N-1}}(s,x,t,y)+|p\otimes H^{\otimes N}(s,x,t,y)|,\label{PREAL_UPPER_BOUND}
\end{align}
which is \textit{almost} the expected upper-bound except that we explicitly have to control the remainder to stop the iteration at some fixed $N$ to avoid the collapse to 0 of $\lambda_N $ as $N$ goes to infinity. This is precisely the purpose of the next subsection. 

\subsubsection{Stochastic control arguments and truncation of the parametrix series}\label{CTR_STO}

In this section, we  aim at controlling the remainder term $(p\otimes H^{\otimes N})(s,x,t,y)$ in the almost Gaussian upper-bound \eqref{PREAL_UPPER_BOUND}.

To this end, we use the variational representation formula to show the a priori derivative estimates of heat kernel when the coefficients are smooth.
The idea is essentially the same as in \cite{dela:meno:10}. 
The following variational representation formula was first proved by Bou\'e and Dupuis \cite{boue:dupu:98}. The reader is referred to  \cite{zhan:09} for an extension to the abstract Wiener space.
\bt\label{Th1}
Let $F$ be a bounded Wiener functional on the classical Wiener space $(\Omega,\sF, \mP)$. Then it holds that
$$
-\ln\mE \e^{F}=\inf_{h\in\cS}\mE\left(\frac{1}{2}\int^T_0|\dot h(s)|^2\dif s-F(\omega+h)\right),
$$
where $\cS$ denotes the set of all $\R^d$-valued $\sF_t$-adapted and absolutely continuous processes with 
$$
\mE\int^T_0|\dot h(s)|^2\dif s<\infty.
$$
\et
Using the above variational representation formula, we obtain the following important lemma.
\bl\label{Le23}
Let  $\ell:\mR^d\to(0,\infty)$ be a bounded measurable function from above and below.
Under {\bf (H$^\sigma_{\alpha}$)} and {\bf (H$^b_0$)}, for any $T>0$,
there is a constant $C=C(\Theta)>0$ such that for all $x\in\mR^d$ and $(s,t)\in\mD^T_0$,
$$
\mE \ell(X_{t,s}(x))\leq C\sup_{z\in\mR^d}\exp\Big\{\ln \ell(z)-C^{-1}|z-\theta^{(1)}_{t,s}(x)|^2\Big\}.
$$
\el
\begin{proof}
Without loss of generality, we assume $s=0$ and write $X_t:=X_{t,s}(x)$. By Theorem \ref{Th1} we have
\begin{align*}
-\ln\mE \ell(X_t)=\inf_{h\in\cS}\mE\left(\frac{1}{2}\int^t_0|\dot h(s)|^2\dif s\textcolor{blue}{-}\ln \ell(X^h_t)\right),
\end{align*}
where $X^h$ solves the following SDE:
$$
\dif X^h_t=\Big(b(t,X^h_t)+\sigma(t,X^h_t)\dot h(t)\Big)\dif t+\sigma(t,X^h_t)\dif W_t,\ X^h_0=x,
$$
i.e. the control process $h$ enters the dynamics in the drift part.
Note that $\theta_t:=\theta_{t,0}(x)$ solves the following ODE:
$$
\dot \theta_t=b(t,\theta_t),\ \ \theta_0=x.
$$
By It\^o's formula, we have
\begin{align*}
\mE|X^h_t-\theta_t|^2&=\mE\int^t_0\Big(2\<X^h_s-\theta_s, b(s,X^h_s)-b(s,\theta_s)+\sigma(s,X^h_s)\dot h(s)\>+(\sigma\sigma^*)(s,X^h_s)\Big)\dif s.
\end{align*}
Recalling 
$$
|b(t,x)-b(t,y)|\leq\kappa_1(1+|x-y|),
$$
the Young inequality yields
$$
\mE|X^h_t-\theta_t|^2\lesssim\mE\int^t_0|X^h_s-\theta_s|^2\dif s+\mE\int^t_0|\dot h(s)|^2\dif s+t.
$$
From the Gronwall inequality, we thus obtain
$$
\mE|X^h_t-\theta_t|^2\lesssim\mE\int^t_0|\dot h(s)|^2\dif s+t.
$$
Hence, for some $C>0$,
$$
\frac{1}{2}\mE\int^t_0|\dot h(s)|^2\dif s\geq C^{-1}\mE|X^h_t-\theta_t|^2-C t.
$$
Therefore, we eventually derive
\begin{align*}
-\ln\mE \ell(X_t)&\geq\inf_{h\in\cS}\mE\left(C^{-1}|X^h_t-\theta_t|^2-\ln \ell(X^h_t)\right)-C
{\geq}\inf_{z\in\mR^d}\left(C^{-1}|z-\theta_t|^2-\ln \ell(z)\right)-C.
\end{align*}
The desired estimate eventually follows from Lemma \ref{Pr1}.
\end{proof}

We now state a direct yet important scaling lemma. We refer to Section 2.3 of \cite{dela:meno:10} for additional details. 
 \bl[Scaling property of the density]
\label{Le42}
 Fix $ (s,t)\in \mathbb D_0^T$ and let $\lambda:=t-s$. 
 Introduce for $u\in [0,1] $, $\widehat X_u^{\lambda}:=\lambda^{-\frac 12}X_{s+u\lambda} $. Then, $(\widehat X_u^{\lambda})_{u\in [0,1]} $ satisfies the SDE
 \begin{align}
 \dif\widehat X_u^{\lambda}&=\lambda^{\frac 12}b\big(s+u\lambda,\widehat X_u^{\lambda}\lambda^{\frac 12}\big) \dif u +\sigma\big(s+u\lambda,\widehat X_u^{\lambda}\lambda^{\frac 12}\big)\dif\widehat W_u^{\lambda}
 =\widehat b^{\lambda}(u,\widehat X_u^{\lambda}) \dif u+\widehat \sigma^{\lambda}(u,\widehat X_u^{\lambda})  \dif\widehat W_u^{\lambda},
 \end{align}
where $ \widehat W_u^{\lambda}=\lambda^{-\frac 12}W_{u\lambda}$ is a Brownian motion. It also holds that:
$$p(s,x,t,y)=\lambda^{-\frac d2} \widehat p^{\lambda}\Big(0,\lambda^{-\frac 12}x,1, \lambda^{-\frac 12}y\Big),$$
and introducing for $z\in \R^d,\ u\in [0,1],\  \partial_u { {\widehat \theta}}_{u,0}^{\lambda}(z)=\widehat b^{\lambda}(u,\widehat \theta_{u,0}(z) ),\ {\widehat \theta}_{0,0}^{\lambda}(z)=z $,
$$\big|{\widehat \theta}_{1,0}^{\lambda}(\lambda^{-\frac 12}x)-\lambda^{-\frac 12}y\big|^2=\lambda^{-1}|\theta_{t,s}(x)-y|^2 .$$
  \el
\begin{proof}We only focus on the last statement. The other ones readily follow from the change of variable. Write:
\begin{align*}
\lambda^{-\frac 12}\theta_{t,s}(x)=\lambda^{-\frac{1}{2}}x+\lambda^{-\frac{1}{2}}\int_s^t b(r,\theta_{r,s}(x)) \dif r
=\lambda^{-\frac{1}{2}}x+\lambda^{\frac 12}\int_0^1 b(s+u\lambda,\theta_{s+u\lambda,s}(x)) \dif u.
\end{align*}
Setting now for $u\in [0,1] $, $\bar \theta_{u,0}(x)=\theta_{s+u\lambda,s}(x) $, the above equation rewrites:
\begin{align*}
\lambda^{-\frac 12}\bar \theta_{1,0}(x)=\lambda^{-\frac{1}{2}}x+\lambda^{\frac 12}\int_0^1 b(s+u\lambda,\bar \theta_{u,0}(x)) \dif u
=\lambda^{-\frac{1}{2}}x+\int_0^1 \widehat b^{\lambda}(u,\lambda^{-\frac 12}\bar \theta_{u,0}(x)\big) \dif u
\end{align*}
from which we readily derive by uniqueness of the solution to the ODE that for $u\in [0,1]$,
$$
\lambda^{-\frac 12}\bar \theta_{u,0}(x)={\widehat \theta}_{u,0}^{\lambda}(\lambda^{-\frac 12}x)=\lambda^{-\frac 12}\theta_{s+u\lambda,s}(x),
$$
which gives the statement.
\end{proof}

We will now use the previous Lemmas \ref{Le23} and \ref{Le42} to establish the following result from which the Gaussian upper-bound will readily follow.

\begin{lemma}[Control of the remainder]\label{CTR_REM}
Choose $N$ large enough in order to have:
\begin{align}\label{AQ6}
-1+\frac{N\alpha}{2}>\frac{d}{2}.
\end{align}
There exists constants $C_0,\lambda_0>0$ depending only on $\Theta$ such that for all $(s,t)\in {\mathbb D}_0^T$ and $x,y\in \R^d $,
$$
|(p\otimes H^{\otimes N})(s,x,t,y)|\leq C_0{\bf g}_{\lambda_0}(s,x,t,y).
$$
\end{lemma}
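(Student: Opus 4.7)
The plan is to combine the iterated parametrix bound from Lemma \ref{Le25_NEW} with the exponential moment estimate of Lemma \ref{Le23} (which rests on the Bou\'e--Dupuis variational representation of Theorem \ref{Th1}), after first reducing to the unit time interval through the scaling Lemma \ref{Le42}. The rescaling is \emph{essential}: a direct estimation of $p\otimes H^{\otimes N}$ via Lemma \ref{Le23} produces a Gaussian factor of the form $\exp(-\lambda_*|\theta^{(1)}_{t,s}(x)-y|^2)$ without the needed $(t-s)^{-1}$ in the exponent, which cannot be dominated by the target $\mathbf{g}_{\lambda_0}$ off-diagonal when $t-s$ is small.

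Fix $(s,t)\in\mathbb{D}_0^T$, set $\lambda:=t-s$, $\widehat x:=\lambda^{-1/2}x$, $\widehat y:=\lambda^{-1/2}y$, and rescale via Lemma \ref{Le42}. The rescaled coefficients $\widehat b^\lambda,\widehat\sigma^\lambda$ still satisfy \textbf{(H$^\sigma_\alpha$)} and \textbf{(H$^b_0$)} with constants depending only on $\Theta$ (using $\lambda\le T\le 1$), so Lemmas \ref{Le25_NEW} and \ref{Le23} apply uniformly in $\lambda$. Applying Lemma \ref{Le25_NEW} to the rescaled kernel and writing the spatial integral as an expectation, Lemma \ref{Le23} (used on $\ell(z)=\widehat{\mathbf g}_{\lambda_N}(r,z,1,\widehat y)$ after a routine truncation from below by $\varepsilon>0$ and letting $\varepsilon\downarrow 0$) yields
\begin{equation*}
\int_{\R^d}\widehat p^\lambda(0,\widehat x,r,z)\,\widehat{\mathbf g}_{\lambda_N}(r,z,1,\widehat y)\,\dif z\,\le\,C(1-r)^{-d/2}\,\sup_{z\in\R^d}\exp\!\left\{-\lambda_N\tfrac{|\widehat\theta^\lambda_{1,r}(z)-\widehat y|^2}{1-r}-C^{-1}|z-\widehat\theta^\lambda_{r,0}(\widehat x)|^2\right\}.
\end{equation*}

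Using Lemma \ref{Pr1} I would then replace $|\widehat\theta^\lambda_{1,r}(z)-\widehat y|^2$ by a comparable $c|z-\widehat\theta^\lambda_{r,1}(\widehat y)|^2$, up to an absorbable additive $O(1)$ error. Completing the square in $z$ gives a Laplace-type minimum bounded below by $\lambda_*|\widehat\theta^\lambda_{r,1}(\widehat y)-\widehat\theta^\lambda_{r,0}(\widehat x)|^2$, uniformly in $r\in[0,1]$ (since $1-r\le 1$), and Lipschitz continuity of the rescaled flow yields $|\widehat\theta^\lambda_{r,1}(\widehat y)-\widehat\theta^\lambda_{r,0}(\widehat x)|\gtrsim|\widehat\theta^\lambda_{1,0}(\widehat x)-\widehat y|$. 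Multiplying by the time-singular prefactor $(1-r)^{-1+N\alpha/2}$ coming from Lemma \ref{Le25_NEW} and integrating over $r\in[0,1]$ gives
\begin{equation*}
|\widehat p^\lambda\otimes\widehat H^{\otimes N}|(0,\widehat x,1,\widehat y)\,\le\,C\,\e^{-\lambda_0|\widehat\theta^\lambda_{1,0}(\widehat x)-\widehat y|^2}\int_0^1(1-r)^{-1+N\alpha/2-d/2}\,\dif r,
\end{equation*}
where the integral is finite thanks to the assumption \eqref{AQ6}. Unscaling through $(p\otimes H^{\otimes N})(s,x,t,y)=\lambda^{-d/2}(\widehat p^\lambda\otimes\widehat H^{\otimes N})(0,\widehat x,1,\widehat y)$ and the last identity of Lemma \ref{Le42} converts the exponent back into $-\lambda_0|\theta^{(1)}_{t,s}(x)-y|^2/(t-s)$, yielding the announced bound by $C_0\,\mathbf{g}_{\lambda_0}(s,x,t,y)$.

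The main obstacle is a scale-matching issue: the quadratic penalty $|z-\widehat\theta^\lambda_{r,0}(\widehat x)|^2$ furnished by the variational argument of Lemma \ref{Le23} carries no $1/(1-r)$ factor, so without the preliminary rescaling one does not recover the $1/(t-s)$ factor inside the Gaussian exponent. It is precisely the scaling of Lemma \ref{Le42} that aligns the two sources of quadratic decay — the one produced by the Gaussian profile $\widehat{\mathbf g}_{\lambda_N}$ and the one produced by Lemma \ref{Le23}. A minor technical nuisance is that $\widehat{\mathbf g}_{\lambda_N}$ is not bounded from below, which forces the above truncation-limit procedure when invoking Lemma \ref{Le23}.
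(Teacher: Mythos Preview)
Your proof is correct and follows essentially the same approach as the paper's: reduce to the unit time interval via the scaling Lemma \ref{Le42}, bound $H^{\otimes N}$ by Lemma \ref{Le25_NEW}, express the spatial integral as an expectation and apply the variational estimate of Lemma \ref{Le23}, then optimize the resulting quadratic exponent using Lemma \ref{Pr1} and the bi-Lipschitz property of the mollified flow. The only differences are presentational --- you carry the rescaled quantities $\widehat p^\lambda,\widehat H,\widehat\theta^\lambda$ explicitly and unscale at the end, whereas the paper simply declares ``without loss of generality $s=0$, $t=1$'' --- and you make explicit both the reason scaling is indispensable (the missing $1/(t-s)$ in the exponent otherwise) and the truncation needed because $\ell$ in Lemma \ref{Le23} must be bounded away from zero, points the paper leaves implicit.
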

\begin{proof}
From the scaling property obtained in  Lemma \ref{Le42} above, we can assume without loss of generality that $s=0$ and $t=1$.
From the definition in \eqref{DE1} and Lemmas \ref{Le25_NEW}, \ref{Le23}, we have
\begin{align*}
|(p\otimes H^{\otimes N})(0,x,1,y)|&\leq\int^1_0\left|\int_{\mR^d} p(0,x,r,z)H^{\otimes N}(r,z,1,y)\dif z\right|\dif r
=\int^1_0|\mE H^{\otimes N}(r,X_{r,0}(x),1,y)| \dif r
\\&\le C_N \int^1_0(1-r)^{-1+\frac{N\alpha}{2}}\mE  {\bf g}_{\lambda_N}(r,X_{r,0}(x),1,y) \dif r
\\&\le C_N\int^1_0(1-r)^{-1+\frac{N\alpha}{2}}\sup_{z\in\mR^d}\exp\Big\{\ln {\bf g}_{\lambda_N}(r,z,1,y)-C^{-1}|z-\theta^{(1)}_{r,0}(x)|^2\Big\} \dif r.
\end{align*}
Since by \eqref{GG1},
\begin{align*}
\ln {\bf g}_{\lambda_N}(r,z,1,y)=\ln  g_{\lambda_N}(1-r,\theta^{(1)}_{1,r}(z)-y)=-\frac{d}{2}\ln(1-r)-\lambda_N |\theta^{(1)}_{1,r}(z)-y|^2/(1-r),
\end{align*}
we have
\begin{align*}
&\sup_z(\ln g_{\lambda_N}(1-r,\theta^{(1)}_{1,r}(z)-y)-C^{-1}|z-\theta^{(1)}_{r,0}(x)|^2)
\\&\leq-\frac{d}{2}\ln(1-r)-\inf_z(\lambda_N |\theta^{(1)}_{1,r}(z)-y|^2/(1-r)+C^{-1}|z-\theta^{(1)}_{r,0}(x)|^2)
\\&\leq-\frac{d}{2}\ln(1-r)-\lambda_N'\inf_z(|z-\theta^{(1)}_{r,1}(y)|^2/(1-r)-C(1-r)+|z-\theta^{(1)}_{r,0}(x)|^2)
\\&\le -\frac{d}{2}\ln(t-r)-\lambda_N'|\theta^{(1)}_{r,1}(y)-\theta^{(1)}_{r,0}(x)|^2/2+C
\\&\leq-\frac{d}{2}\ln(t-r)-\lambda_N''|\theta^{(1)}_{1,0}(x)-y|^2+C,
\end{align*}
where the last step is due to \eqref{AS0}.
Therefore, from the condition \eqref{AQ6} and the above computations, there exist constants $C_0,\lambda_0>0$ depending only on $\Theta$ such that
$$
|(p\otimes H^{\otimes N})(0,x,1,y)|\leq C_0g_{\lambda_0}(1,\theta^{(1)}_{1,0}(x)-y)=C_0{\bf g}_{\lambda_0}(0,x,1,y).
$$
The general statement for arbitrary $(s,t)\in \mathbb D_0^T$ again follows from the scaling arguments of Lemma \ref{Le42}. 
\end{proof}

\subsubsection{Final derivation of the two-sided heat kernel estimates}
We are now in position to prove the following two-sided estimates.
\begin{theorem}\label{Th26}
Under {\bf (H$^\sigma_{\alpha}$)} and {\bf (H$^b_0$)}, for any $T>0$,
there exists constants $C_0,\lambda_0>0$ depending only on $\Theta$ such that for all $(s,t)\in\mD^T_0$ and $x,y\in\R^d$,
$$
C_0^{-1}{\bf g}_{\lambda^{-1}_0}(s,x,t,y)\le p(s,x,t,y)
\le C_0{\bf g}_{\lambda_0}(s,x,t,y). \label{Density_bounds}
$$
\end{theorem}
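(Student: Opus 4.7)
The upper bound is essentially already in hand. Iterating the forward Duhamel identity \eqref{FR} produces \eqref{FR1}; combining this expansion with the frozen density controls \eqref{FIRST_CTR_FROZEN}, the iterated kernel bound of Lemma \ref{Le25_NEW} and the convolution inequality of Lemma \ref{CKE} yields the pre-estimate \eqref{PREAL_UPPER_BOUND}. Choosing $N$ large enough to satisfy \eqref{AQ6} and invoking Lemma \ref{CTR_REM} to absorb the remainder then gives, for a constant $C$ and an exponent $\lambda>0$ depending only on $\Theta$,
\begin{equation*}
p(s,x,t,y)\leq C\,{\bf g}_{\lambda}(s,x,t,y).
\end{equation*}

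For the lower bound, I would start from \eqref{FR} in the form $p=\tilde p_1+p\otimes H$. The frozen Gaussian contribution is already controlled from below by \eqref{Low_bis}, so it suffices to absorb $p\otimes H$ into this principal term. Plugging the upper bound just obtained into the convolution defining $p\otimes H$, invoking Lemma \ref{Le25_NEW} (at $N=1$) and Lemma \ref{CKE} yields a bound of the form
\begin{equation*}
|(p\otimes H)(s,x,t,y)|\leq C(t-s)^{\alpha/2}\,{\bf g}_{\lambda'}(s,x,t,y),
\end{equation*}
for some $\lambda'>0$ depending only on $\Theta$.

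The main obstacle is that $\lambda'$ is in general strictly smaller than the $\tilde\lambda_0^{-1}$ governing the lower bound for $\tilde p_1$, so the above bound on $p\otimes H$ involves a Gaussian with strictly slower spatial decay than the one controlling $\tilde p_1$ from below. I would therefore first establish the lower bound only on the \emph{on-diagonal} regime $|\theta^{(1)}_{t,s}(x)-y|^2\leq K(t-s)$, where the ratio ${\bf g}_{\lambda'}/{\bf g}_{\tilde\lambda_0^{-1}}$ is bounded by a constant depending only on $K$ and $\Theta$. For $(t-s)\leq\delta(\Theta,K)$ sufficiently small, the prefactor $(t-s)^{\alpha/2}$ then makes the remainder strictly smaller than $\tfrac12\tilde C_0\,{\bf g}_{\tilde\lambda_0^{-1}}(s,x,t,y)$, yielding the desired Gaussian lower bound in this localized regime.

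To propagate the bound to arbitrary $(s,t)\in\mD^T_0$ and arbitrary $y\in\R^d$, I would perform a standard Chapman-Kolmogorov chaining. Partition $[s,t]$ into $n$ equal subintervals $s=t_0<\cdots<t_n=t$ with $(t-s)/n\leq\delta$, and introduce waypoints $z_0=x,z_1,\ldots,z_n=y$ interpolating the regularized flow so that each consecutive couple $(z_i,z_{i+1})$ sits in the on-diagonal regime on its subinterval; thanks to Lemma \ref{Pr1} and \eqref{AS0}, this can be achieved by placing $z_i$ on the Euclidean segment joining $\theta^{(1)}_{t_i,s}(x)$ and $\theta^{(1)}_{t_i,t}(y)$. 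Restricting the iterated Chapman-Kolmogorov integral to small balls around these waypoints and applying the local lower bound on each subinterval, a classical Aronson-type computation then yields the claimed Gaussian lower bound, at the price of possibly decreasing $\lambda_0^{-1}$; the number $n$ of pieces needed grows linearly with $|\theta^{(1)}_{t,s}(x)-y|^2/(t-s)$ and is absorbed into the final exponential constants.
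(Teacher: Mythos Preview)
Your proposal is correct and follows the same architecture as the paper: the upper bound comes exactly from \eqref{PREAL_UPPER_BOUND} plus Lemma \ref{CTR_REM}; the lower bound is obtained by first absorbing $p\otimes H$ into $\tilde p_1$ on the diagonal regime for $t-s$ small, and then chaining via Chapman--Kolmogorov.

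The only real difference is in how the chain is built. The paper first rescales to $s=0$, $t=1$ via Lemma \ref{Le42} and then constructs the waypoints by a recursive reachability argument: iterating ``flow forward by $\theta^{(1)}_{t_{j+1},t_j}$, then enlarge by a ball of radius $1/(2\sqrt{M})$'' it shows the resulting set $Q_M$ contains $y$, using only the bi-Lipschitz bound \eqref{JG11}. Your alternative places the waypoints explicitly at $z_i=(1-\tfrac{i}{n})\theta^{(1)}_{t_i,s}(x)+\tfrac{i}{n}\theta^{(1)}_{t_i,t}(y)$. This also works: writing $\phi=\theta^{(1)}_{t_{i+1},t_i}$ and $\psi=\phi-\mathrm{id}$, one checks that $|\phi(z_i)-z_{i+1}|\le |\psi(z_i)-(1-\tfrac{i}{n})\psi(a_i)-\tfrac{i}{n}\psi(b_i)|+\tfrac{1}{n}|a_{i+1}-b_{i+1}|\lesssim D/n$, because $\psi$ is $C(\Theta)(t_{i+1}-t_i)$--Lipschitz and $|a_i-b_i|\asymp D:=|\theta^{(1)}_{t,s}(x)-y|$ by \eqref{JG11}; hence $n\asymp D^2/(t-s)$ suffices. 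Both constructions rely only on $\|\nabla b_1\|_\infty$ and yield constants depending solely on $\Theta$; yours is more explicit, the paper's more geometric and perhaps more directly transportable to degenerate settings.
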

\begin{proof}
(i) {\bf (Upper bound)}  The upper bound is a direct consequence of the expansion \eqref{PREAL_UPPER_BOUND} and the previous Lemma \ref{CTR_REM} up to a possible modification of the constants $C_0,\lambda_0 $ that anyhow still only depend on $\Theta $.

(ii) {\bf (Lower bound)} 
By the upper bound and Lemmas \ref{Le25_NEW} and \ref{CKE}, we  get for some $\lambda_1<\lambda_0$ and $\eps\in(0,1)$:
\begin{align}
|p\otimes H(s,x,t,y)|&\lesssim \int_s^t(t-r)^{-1+\frac{\alpha}{2}}\int_{\R^d}{\bf g}_{\lambda_1}(s,x,r,z){\bf g}_{\lambda_1}(r,z,t,y)\dif z\dif r
\lesssim (t-s)^{\frac{\alpha}{2}}{\bf g}_{\eps\lambda_1}(s,x,t,y).
\end{align} 
Hence, for $|\theta_{t,s}^{(1)}(x)-y|\leq \sqrt{t-s}$, recalling \eqref{FR} and \eqref{AQ74}, we have
\begin{align*}
p(s,x,t,y)&\geq \left(C_1-C_2(t-s)^{\frac{\alpha}{2}}\right){\bf g}_{\eps\lambda_1}(s,x,t,y)
\geq \left(C_1-C_2(t-s)^{\frac{\alpha}{2}}\right)(t-s)^{-d/2}\e^{-\eps\lambda_1}.
\end{align*} 
In particular, letting $t-s\leq\delta$ with $\delta$ small enough, we obtain that 
\begin{align}\label{DP2}
p(s,x,t,y)\geq C_3(t-s)^{-d/2}\mbox{ on $\mD^\delta_0$ and $|\theta_{t,s}^{(1)}(x)-y|\leq \sqrt{t-s}$}.
\end{align}
We now precisely use a chaining argument to obtain the lower bound when $|\theta_{t,s}^{(1)}(x)-y|\geq \sqrt{t-s} $. The idea is to consider a suitable sequence of balls between the points $x$ and $y$, 
for which the diagonal lower estimate \eqref{DP2} holds,  and which also have a large enough volume to consent to derive the global \textit{off-diagonal} lower bound. The usual strategy to build such balls consists in considering the ``geodesic" line between $x$ and $y$. In the non-degenerate case, when the coefficients are bounded, this is nothing but the straight-line joining $x$ and $y$, see e.g. \cite{bass:97}. When dealing with unbounded coefficients, recall that $b$ has linear growth and is \textit{smooth}, a possibility is to consider the optimal path associated with the deterministic controllability problem $\dot \phi_u=b(u,\phi_u)+\varphi_u,\ u\in [s,t],\ \phi_s=x,\phi_t=y $ with $\varphi\in L^2([s,t],\R^d) $.  This was the choice in \cite{dela:meno:10} for a Lipschitz continuous $b$. The constants in the lower bound estimates obtained therein actually depend on the Lipschitz modulus $b$. We adopt here a slightly different strategy which only involves the \textit{mollified} flow $\theta^{(1)} $ but which will have the main advantage to provide constants that will again only depend on $\Theta$ and not on the smoothness of $b$, using thoroughly the controls established in Lemma \ref{Pr1}. We now detail such a construction which is in some sense \textit{original} though pretty natural.

From the scaling arguments of Lemma \ref{Le42}, we can assume without loss of generality that $\delta=1$, $s=0$ and $t=1$. Suppose 
$|\theta_{1,0}^{(1)}(x)-y|>1$ and let $M$ be the smallest integer greater than $4\e^{2\|\nabla b_1\|_\infty}|\theta_{1,0}^{(1)}(x)-y|^2$, i.e.,
\begin{align}\label{DP1}
M-1\leq 4\e^{2\|\nabla b_1\|_\infty}|\theta_{1,0}^{(1)}(x)-y|^2<M.
\end{align}
Importantly, we recall from \eqref{DA1} that under {\bf (H$^\sigma_{\alpha}$)} and {\bf (H$^b_0$)}, $\|\nabla b_1\|_\infty\le C(\kappa_1)$.  
Let 
$$
t_j:=j/M,\ \ j=0,1,\cdots, M.
$$
The important point for the proof is the following claim.

{\it Claim:} Set $\xi_0:=x$ and $\xi_{M}:=y$. There exist $(M+1)$-points $\xi_0,\xi_1,\cdots,\xi_{M}$ 
such that 
$$
|\xi_{j+1}-\theta_{t_{j+1},t_j}^{(1)}(\xi_j)|\leq \tfrac{1}{2\sqrt{M}},\ j=0,1,\cdots, M-1.
$$
Indeed, let $Q_1:=B_{1/(2\sqrt{M})}(\theta_{t_{1},0}^{(1)}(x))$ and recursively define for $j=2,\cdots, M$,
$$
Q_{j}:=\bigcup_{z\in Q_{j-1}} B_{1/(2\sqrt{M})}(\theta_{t_j,t_{j-1}}^{(1)}(z))=\Big\{z: {\rm dist}\Big(z, \theta_{t_j,t_{j-1}}^{(1)}(Q_{j-1})\Big)\leq 1/(2\sqrt{M})\Big\}.
$$
Letting $\kappa:=\|\nabla b_1\|_\infty$ and noting that (see \eqref{JG11})
$$
\e^{-\kappa/M}|z-z'|\leq|\theta_{t_{j+1},t_j}^{(1)}(z)-\theta_{t_{j+1},t_j}^{(1)}(z')|\leq \e^{\kappa/M}|z-z'|,
$$
by the previous induction method and noting that $\theta_{t_{j+1},t_j}^{(1)}\circ\theta_{t_j, 0}^{(1)}(x)=\theta_{t_{j+1}, 0}^{(1)}(x)$, we have
$$
B_{ j\e^{-(j-1)\kappa/M}/(2\sqrt{M})}(\theta_{t_j, 0}^{(1)}(x))\subset Q_j, \ \ j=1,2,\cdots, M.
$$

Intuitively, the image of a ball with radius $r$ under the flow $\theta_{t_j,t_{j-1}}^{(1)}$ contains a ball with radius $\e^{-\kappa/M} r$.
In particular, by \eqref{DP1},
$$
\xi_M=y\in B_{ \sqrt{M}\e^{-\kappa}/2}(\theta_{1, 0}^{(1)}(x))\subset B_{ M\e^{-(M-1)\kappa/M}/(2\sqrt{M})}(\theta_{t_M, 0}^{(1)}(x))\subset Q_M.
$$
The claim then follows. The idea of the construction is illustrated in Figure \ref{FIGURE}. 
\begin{figure}
\begin{center}
\includegraphics[scale=.6]{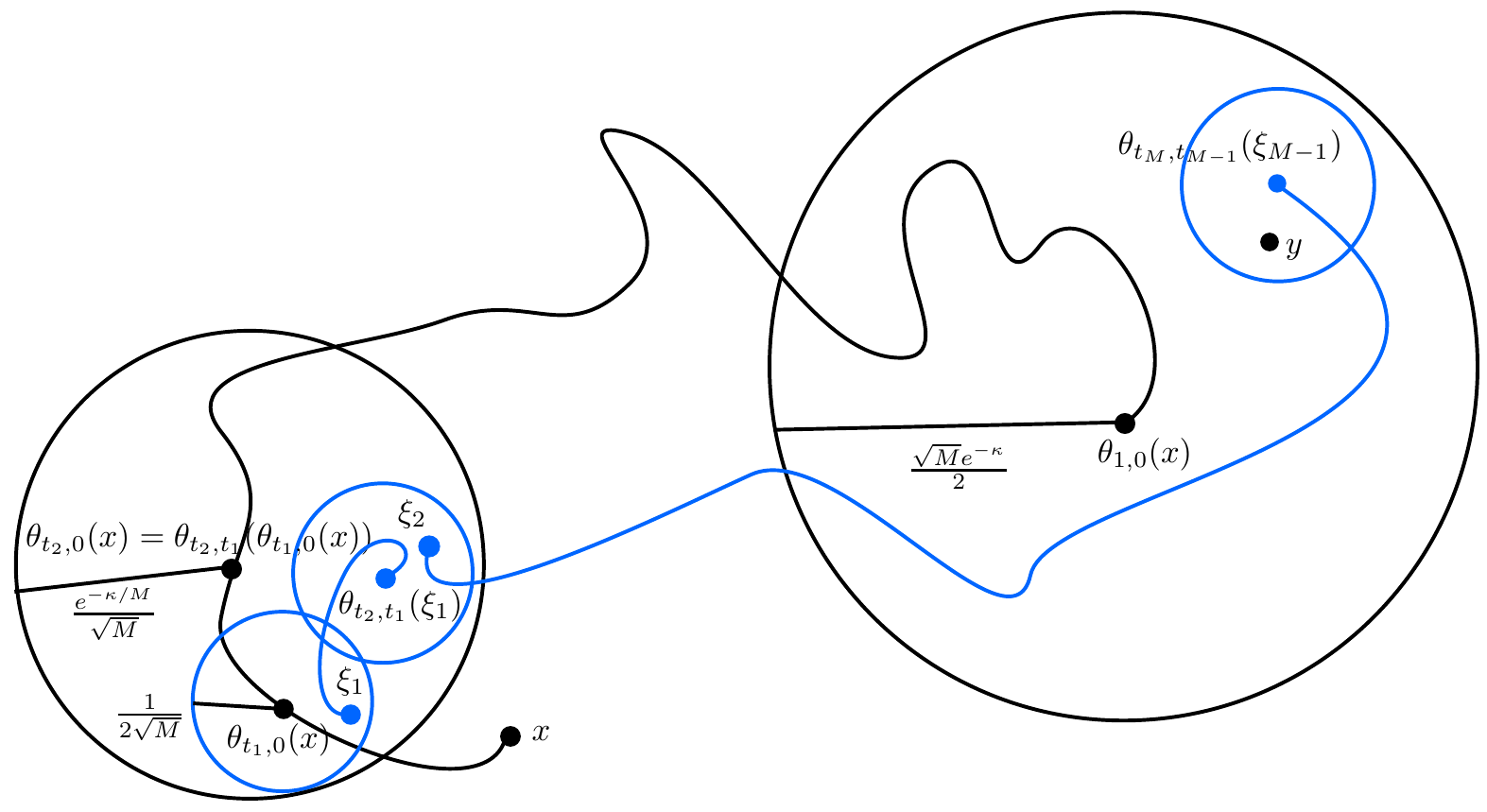}
\end{center}
\caption{Construction of the chaining balls for the lower bound.}
\label{FIGURE}
\end{figure}

Now let $\gamma:=1/(2(\e^{\|\nabla b_1\|_\infty}+1))$ and
$z_0:=x$, $z_{M+1}:=y$ and $\Sigma_j:=B_{\gamma/\sqrt{M}}(\xi_j)$. From the previous claim, we have that for $z_j\in \Sigma_j$ and $z_{j+1}\in \Sigma_{j+1}$,
\begin{align*}
|\theta_{t_{j+1}, t_j}^{(1)}(z_j)-z_{j+1}|
&\leq|\theta_{t_{j+1}, t_j}^{(1)}(z_j)-\theta_{t_{j+1}, t_j}^{(1)}(\xi_j)|+|\theta_{t_{j+1}, t_j}^{(1)}(\xi_j)-\xi_{j+1}|+|\xi_{j+1}-z_{j+1}|
\\&\leq \e^{\|\nabla b_1\|_\infty} |z_j-\xi_j|+|\theta_{t_{j+1}, t_j}^{(1)}(\xi_j)-\xi_{j+1}|+|\xi_{j+1}-z_{j+1}|
\\&\leq \frac{\gamma(\e^{\|\nabla b_1\|_\infty}+1)}{\sqrt{M}}+\frac{1}{2\sqrt{M}}=\frac{1}{\sqrt{M}}=\sqrt{t_{j+1}-t_j}.
\end{align*}
This precisely means that the previous diagonal lower bound holds for $p(t_j, z_j,t_{j+1},z_{j+1} )$. 
Thus, by the Chapman-Kolmogorov equation and \eqref{DP2}, we have
\begin{align*}
p(0,x,1,y)&=\int_{\R^d}\cdots\int_{\R^d}p(t_0,z_0,t_1,z_1)\cdots p(t_{M-1}, z_{M-1},t_{M},z_{M})\dif z_1\cdots \dif z_{M-1}
\\&\geq \int_{\Sigma_1}\cdots\int_{\Sigma_{M-1}}p(t_0,z_0,t_1,z_1)\cdots p(t_{M-1}, z_{M-1},t_{M},z_{M})\dif z_1\cdots \dif z_{M-1}
\\&\geq (C_3M^{d/2})^{M}\int_{\Sigma_1}\cdots\int_{\Sigma_{M-1}}\dif z_1\cdots \dif z_{M-1}
=(C_3M^{d/2})^{M} (M^{-d/2}\gamma^d|B_1|)^{M-1}
\\&=C^{M}_3M^{d/2} (\gamma^d|B_1|\textcolor{blue}{)}^{M-1}=M^{d/2}\exp\{M\log(C_3\gamma^d|B_1|)\}/(\gamma^d|B_1|)
\\&\geq C_4\exp\{M\log(C_3\gamma^d |B_1|)\}\geq C_5\exp\{-C_6 |\theta_{1,0}^{(1)}(x)-y|^2\},
\end{align*}
recalling the definition of $M$ in \eqref{DP1} and that $ C_3\gamma^d |B_1|\le  1$ for the last inequality.
 \end{proof}

\subsection{Estimates for the derivatives of the heat kernel with smooth coefficients}
\label{SMOOTH_DER_HK}
We insisted in the previous section on the fact that, even though we considered smooth coefficients, all our estimates for the two-sided Gaussian bounds were actually uniform w.r.t. $\Theta $ which only depend\textcolor{red}{s} on parameters appearing in {\bf (H$^b_\beta$)} and {\bf (H$^\sigma_{\alpha}$)}.

Our point of view is here different since we mainly want to derive some \textit{a priori bounds} on the derivatives of the heat-kernel  when the coefficient are smooth which will then serve in a second time, namely in the circular argument developed in Section \ref{SEC_MT_CIRC},  to prove that those estimates are actually again independent of the smoothness of the coefficients. Anyhow, in the current section, we fully exploit such a smoothness and obtain controls on the derivatives which \textit{do} depend on the derivatives of $b,\sigma $. To this end, we restart from the representation \eqref{FR} of the density and  
exploit the gradient estimate \eqref{GRA}.
 \subsubsection{Proof of the main estimates}
 \bt[Controls on the derivatives of the heat kernel with smooth coefficients]\label{CTR_DER_HK_SMOOTH_COEFF}
 Under {\bf (S)}, for $j\in \{ 1,2\}$, there exist constants $C_j:=C_j\big(\Theta,\kappa_j\big),\lambda_j:=\lambda_j (\Theta)>0,$ such that
$$
|\nabla^j_x p(s,x,t,y)|\le C_j(t-s)^{-\frac j2}{\bf g}_{\lambda_j}(s,x,t,y),\ |\nabla_y p(s,x,t,y)|\le  C_1(t-s)^{-\frac12}{\bf g}_{\lambda_1}(s,x,t,y).
$$
 \et
 \begin{proof}
 (i) Let us  first establish the estimates on the derivatives w.r.t. the backward variable $x$. 
 Write from \eqref{FR}:
 \begin{align}
 \nabla_x^j p(s,x,t,y)&=\nabla_x^j \tilde p_1(s,x,t,y)+\nabla_x^j \big(p\otimes H\big)(s,x,t,y) . 
  \label{THE_EQ_FOR_DER_SMOOTH}
 \end{align} 
 From Lemma \ref{Le24} it readily follows that
 \begin{equation}\label{MT_DER_BK}
 |\nabla_x^j \tilde p_1(s,x,t,y)|\le C_j (t-s)^{-\frac j2}{\mathbf g}_{\lambda_j}(s,x,t,y).
 \end{equation} 
 For the other contribution recall that for $u:=\frac{s+t}{2}$,
 \begin{align*}
p\otimes H(s,x,t,y)=\int_u^t  \E[H(r, X_{r,s}(x),t ,y)] \dif r+\int_s^u  \E[H(r, X_{r,s}(x),t ,y)] \dif r=:I_1(x)+I_2(x).
 \end{align*}
For $I_1$, choosing $p>1$ such that $\frac{d+\alpha-2}{2p}>\frac{d}{2}-1$, by \eqref{GRA} and Lemmas \ref{Le25_NEW} and \ref{CKE}, we get
 \begin{align}
|\nabla^j_x I_1(x)|&\lesssim \int_u^t  (r-s)^{-j/2}(\E|H(r, X_{r,s}(x),t ,y)|^p)^{1/p} \dif r\label{KM1}
\\&=\int_u^t  (r-s)^{-j/2}\left(\int_{\mR^d}p(s,x,r,z)|H(r, z,t ,y)|^p\dif z\right)^{1/p} \dif r\no
\\&\lesssim (t-s)^{-j/2}\int_u^t (t-r)^{-\frac{1}{p}+\frac{\alpha}{2p}+\frac{d}{2p}-\frac{d}{2}} \left(\int_{\mR^d}{\bf g}_{\lambda_0}(s,x,r,z) {\bf g}_{p\lambda_1}(r,z,t,y)\dif z\right)^{1/p} \dif r\no
\\&\lesssim (t-s)^{-j/2}\left(\int_u^t (t-r)^{-\frac{1}{p}+\frac{\alpha}{2p}+\frac{d}{2p}-\frac{d}{2}}  \dif r\right){\bf g}^{1/p}_{\lambda_2}(s,x,t,y)\no
\\&\lesssim (t-s)^{-j/2}(t-s)^{1-\frac{1}{p}+\frac{\alpha}{2p}+\frac{d}{2p}-\frac{d}{2}}{\bf g}^{1/p}_{\lambda_2}(s,x,t,y)\no
\lesssim (t-s)^{-j/2}{\bf g}_{\lambda_2/p}(s,x,t,y).
 \end{align}
To treat $I_2(x)$, we only consider $j=1$ since the case $j=2$ is similar. By the chain rule, we have
\begin{align*}
\nabla_x \E[ \big( H(r, X_{r,s}(x),t ,y)\big)]  =  \E[\big( \nabla_x  H\big)(r, X_{r,s}(x),t ,y)\cdot \nabla_x X_{r,s}(x) ],
\end{align*}
and for all $k\in \{1,\cdots,d\}$,
\begin{align}
\p_{x_k}H(s,x,t,y)&:=\tr(\p_{x_k}a(s,x)\cdot\nabla^2_x{\tilde p}_1(s,x,t,y)+\p_{x_k} b(s,x)\cdot\nabla_x{\tilde p}_1(s,x,t,y)
\\&\quad+\tr(a(s,x)-a(s,\theta_{s,t}(y)))\cdot\p_{x_k}\nabla^2_x{\tilde p}_1(s,x,t,y)
\\&\quad+(b(s,x)-b(s,\theta_{s,t}(y)))\cdot\p_{x_k}\nabla_x{\tilde p}_1(s,x,t,y).
\end{align}
Thus by Lemma \ref{Le24}, \eqref{DA4} and \eqref{FIRST_CTR_FROZEN}, it is easy to see that for some $\lambda_3>0$,
\begin{align}\label{GA1}
|\nabla_x H(s,x,t,y)|\lesssim (t-s)^{-1}{\bf g}_{\lambda_3}(s,x,t,y).
\end{align}
%\textcolor{red}{
We carefully emphasize that the constant implicitly associated with the symbol $\lesssim $ here \textbf{does} depend on the smoothness of the coefficients.
%{\blue Z.: Here the implicit constants do depend on the smoothness of the coefficients. See the statement in the lemma.}
%}
Using the same argument as above, from the H\"older inequality, one sees that for $p=\frac{d}{d-1}$,
\begin{align*}
|\nabla_x I_2(x)|&\lesssim \int_s^u (\E|(\nabla_x H)(r, X_{r,s}(x),t ,y)|^p)^{1/p} \dif r
\\&\lesssim \int_s^u(t-r)^{-1} (\E{\bf g}^p_{\lambda_3}(r, X_{r,s}(x),t ,y))^{1/p} \dif r
\lesssim (t-s)^{-\frac{1}{2}}{\bf g}_{\lambda_4}(s,x,t,y).
\end{align*}
We thus obtain the gradient estimate in the variable $x$.
\\
\\
(ii) Let us now turn to the gradient estimate w.r.t. $y$. 
We restart from \eqref{D1} differentiating first w.r.t. $y$. This can be done for arbitrary freezing parameters $(\tau,\xi)$. Write:
\begin{align}
\nabla_y p(s,x,t,y)
&=\nabla_y\tilde{p}^{(\t,\xi)}(s,x,t,y)
+\int^t_s\int_{\R^d}p(s,x,r,z)\cA^{(\tau,\xi)}_{r,z}\nabla_y{\tilde p}^{(\t,\xi)}(r,z,t,y)\dif z\dif r\\
&=-\nabla_x\tilde{p}^{(\t,\xi)}(s,x,t,y)
-\int^t_s\int_{\R^d}p(s,x,r,z)\cA^{(\tau,\xi)}_{r,z}\nabla_z{\tilde p}^{(\t,\xi)}(r,z,t,y)\dif z\dif r,\label{PPX}
\end{align}
where we have used the explicit expression \eqref{GAU} for the second equality. Still letting $u=\frac{s+t}{2}$ and taking 
$(\tau,\xi)=(t,y)$, we can split
$$
\nabla_y p(s,x,t,y)=-\nabla_x\tilde{p}_1(s,x,t,y)-J_1(y)-J_2(y),
$$
where 
\begin{align*}
J_1(y)&:=\int^u_s\int_{\R^d}p(s,x,r,z)\cA^{(t,y)}_{r,z}\nabla_z{\tilde p}_1(r,z,t,y)\dif z\dif r,\\
J_2(y)&:=\int^t_u\int_{\R^d}p(s,x,r,z)\cA^{(t,y)}_{r,z}\nabla_z{\tilde p}_1(r,z,t,y)\dif z\dif r.
\end{align*}
For $J_1(y)$, from the Gaussian upper-bound of Theorem \ref{Th26},  \eqref{FIRST_CTR_FROZEN} and Lemma \ref{CKE} (see also Lemma \ref{Le25_NEW}), 
we have
\begin{align*}
|J_1(y)|\lesssim \int^u_s(t-r)^{-3/2}\int_{\R^d}{\bf g}_{\lambda_0}(s,x,r,z){\bf g}_{\lambda_5}(r,z,t,y)\dif z\dif r
\lesssim(t-s)^{-1/2}{\bf g}_{\lambda_6}(s,x,t,y).
\end{align*}
For $J_2$, integrating by parts and recalling \eqref{ABBR_2} and \eqref{ABBR_1}, we have
\begin{align*}
|J_2(y)|&\leq \int^t_u\left|\int_{\R^d}\nabla_z p(s,x,r,z)\cA^{(t,y)}_{r,z}{\tilde p}_1(r,z,t,y)\dif z\right|\dif r\\
&\quad+\int^t_u\left|\int_{\R^d} p(s,x,r,z)\nabla_zb\cdot\nabla_z{\tilde p}_1(r,z,t,y)\dif z\right|\dif r\\
&\quad+\int^t_u\left|\int_{\R^d} p(s,x,r,z)\nabla_z a\cdot\nabla^2_z{\tilde p}_1(r,z,t,y)\dif z\right|\dif r\\
&=:
%\textcolor{red}{(}J_{21}+J_{22}+J_{23} \textcolor{red}{)}(y).
J_{21}(y)+J_{22}(y)+J_{23}(y).
\end{align*}
For $J_{21}(y)$, recalling from \eqref{DEF_H} that $\cA^{(t,y)}_{r,z}{\tilde p}_1(r,z,t,y)=H(r,z,t,y)$, we derive from \eqref{GRA} and as in \eqref{KM1} that
\begin{align*}
J_{21}(y)&=\int^t_u\left|\int_{\R^d} p(s,x,r,z)\nabla_z H(r,z,t,y)\dif z\right|\dif r
=\int^t_u\left|\mE(\nabla_z H)(r,X_{r,s}(x),t,y)\right|\dif r\\
&\lesssim\int^t_u(r-s)^{-1/2}(\mE|H(r,X_{r,s}(x),t,y)|^p)^{1/p}\dif r\lesssim (t-s)^{-1/2}{\bf g}_{\lambda_7}(s,x,t,y).
\end{align*}
For $J_{22}(y)$, from the upper bound in Theorem \ref{Th26} and \eqref{FIRST_CTR_FROZEN}, we have
$$
J_{22}(y)\lesssim \int^t_u(t-r)^{-1/2}\int_{\R^d} {\bf g}_{\lambda_0}(s,x,r,z){\bf g}_{\lambda_1}(r,z,t,y)\dif z\dif r\lesssim
(t-s)^{-1/2}{\bf g}_{\lambda_8}(s,x,t,y).
$$
For $J_{23}$, since $|\nabla^2_z{\tilde p}_1(r,z,t,y)|$ has the singularity $(t-r)^{-1}$, noting that
$$
\nabla_za\cdot\nabla^2_z{\tilde p}_1=\nabla^2_z(\nabla_za\cdot{\tilde p}_1)
-\nabla^3_za\cdot{\tilde p}_1-\nabla^2_za\cdot\nabla_z{\tilde p}_1,
$$
as above, by \eqref{GRA} we still have
$$
J_{23}(y)\lesssim (t-s)^{-1/2}{\bf g}_{\lambda_9}(s,x,t,y).
$$
Combining the above estimates, we obtain the derivative estimate in $y$. The proof is complete.
 \end{proof}

\begin{remark}  
We point out that Theorem \ref{CTR_DER_HK_SMOOTH_COEFF} anyhow has some interest by itself. A careful reading of the proof shows that actually the statements about the derivatives w.r.t. $x$ hold true if additionally to {\bf (H$^\sigma_{\alpha}$)}, {\bf (H$^b_\beta$)}, the coefficients $b,\sigma$ are twice continuously differentiable with bounded derivatives and that the second order derivatives are themselves  H\"older continuous. In this framework, the Duhamel representation \eqref{FR} coupled to the heat-kernel estimates of Theorem \ref{Th26} provides an alternative approach to the \textit{full} Malliavin calculus viewpoint developed in \cite{gobe:02}. 
\end{remark}
 
 \section{Proof of Main Theorem}\label{SEC_MT_CIRC}
 
 In the following proof, the final time horizon $T>0$ is fixed.
 We first work under the assumptions {\bf (S)} aiming at obtaining constants in the estimates of Section \ref{SMOOTH_DER_HK} that only depend on $\Theta:=(T,\alpha,\beta,\kappa_0,\kappa_1,d)$ introduced in \eqref{DEF_THETA}.
 To this end, we introduce for $\delta>0$ the SDE \eqref{SDE_1} with diffusion coefficient $\sigma(t,x)=\delta\mI_{d\times d}$ 
 and denote by $\bar p_\delta$ the corresponding heat kernel.
 By the lower bound estimate proven in Theorem \ref{Th26} and scaling techniques similar to those presented in Lemma \ref{Le42}, it holds that for any $\lambda>0$, there exists $\delta:=\delta(\lambda)$ large enough and $\bar C_\delta>0, \lambda' $ depending on $\bar \Theta=(T,\beta,\delta,\kappa_1,d) $ such that
 for all $(s,t)\in \mD^T_0$ and $x,y\in\R^d$,
 \begin{equation}\label{HK_BOUNDS_DELTA}
\bar C_\delta^{-1} {\bf g}_{\lambda}(s,x,t,y)\le  \bar p_\delta(s,x,t,y)\le \bar C_\delta {\bf g}_{\lambda'}(s,x,t,y).
 \end{equation}
 We carefully mention that the Chapman-Kolmogorov equation satisfied by $\bar p_\delta$ plays a key role in the following proof when we use a Gronwall type argument. This important property had already been successfully used in \cite{pasc:pesc:19} to derive Aronson type estimates for some degenerate SPDEs. We can as well refer to \cite{pasc:pesc:20} for other applications of the parametrix method to non degenerate parabolic SPDEs.
 
 Importantly, with the notations of Section \ref{SEC_FROZEN_PLUS_DUHAMEL}, we will here choose $\lambda$, and then $\delta:=\delta(\lambda) $ s.t. for all $\gamma\in [0,1] $, $(s,t)\in \mathbb D_0^T,\ x,y\in \R^d $ and $j\in \{0,1,2\} $, 
 \begin{align}
 \label{AQ77} 
 &|\theta_{t,s}(x)-y|^\gamma|\nabla_y^j \tilde{p}_0(s,x,t,y)|+|x-\theta_{s,t}(y)|^\gamma|\nabla_x^j \tilde{p}_1(s,x,t,y)|\le 
 C_\delta (t-s)^{\frac{\gamma}2-\frac{j}{2}}\bar p_\delta(s,x,t,y),
 \end{align}
where $C_\delta$ here only depends on $\Theta = (T,\alpha,\beta,\kappa_0,\kappa_1,d)$ and $\delta,\gamma$.
 
 Without further declaration, we shall fix from now on a $\delta$ such that \eqref{AQ77} holds. From the definition of $H$ in \eqref{DEF_H} and the proof of Lemma \ref{Le25_NEW}, we also derive from this choice of $\delta $  that, under the sole assumptions {\bf (H$^\sigma_{\alpha}$)} and {\bf (H$^b_\beta$)},  there exists $C:=C(\Theta)$ s.t. for all $(s,t)\in \mathbb D_0^T,\ x,y\in \R^d $:
 \begin{equation}
\label{BD_UNIF_H}
|H(s,x,t,y)|\le C(t-s)^{-1+\frac \alpha 2}\bar p_\delta(s,x,t,y).
 \end{equation}

For simplicity we will write from now on $\bar p=\bar p_\delta$.  In particular, for all $(s,t)\in \mathbb D_0^T,\ x,y\in \R^d ,\ r\in [s,t]$:
\begin{equation}\label{CK_PROP_FOR_BAR_P}\tag{\bf{CK}}
\int_{\mR^d} \bar p(s,x,r,z)\bar p(r,z,t,y)\dif z=\bar p(s,x,t,y).
\end{equation}
For the rest of section, we use the convention that
 all the constants appearing below only depend on $\Theta=(T,\alpha,\beta,\kappa_0,\kappa_1,d)$. Again, we have shown in the previous section that for smooth coefficients the expected bounds for the derivatives hold. The constants in Theorem \ref{CTR_DER_HK_SMOOTH_COEFF} however do depend on the derivatives of the coefficients, since we use the gradient estimate \eqref{GRA}. 
 We aim here at proving that we can obtain the same type of estimates as in Theorem \ref{CTR_DER_HK_SMOOTH_COEFF} under {\bf (H$^\sigma_{\alpha}$)}, {\bf (H$^b_\beta$)} and \eqref{DA4} but for constants that only depend on $\Theta$. This is the purpose of Sections \ref{FODX} to \ref{FODY}. We will  then eventually derive in Section \ref{COMPACTNESS} the main results of Theorem \ref{th1} thanks to some compactness arguments (Ascoli-Arzel\`a theorem) thanks to the uniformity of the controls obtained for mollified parameters.
\subsection{First order derivative estimates with respect to the backward variable $x$}\label{FODX}
Without loss of generality we shall assume $s=0$ and for $t\in(0,T]$, we define
$$
f_1(t):=\sup_{x,y}|\nabla_x p(0,x,t,y)|/\bar p(0,x,t,y).
$$
From Theorem \ref{CTR_DER_HK_SMOOTH_COEFF} and \eqref{HK_BOUNDS_DELTA}, we know that
$$
\int^T_0f_1(t)\dif t<\infty.
$$
By the forward representation formula \eqref{FR}, we have
\begin{align*}
|\nabla_x p(0,x,t,y)|&\leq |\nabla_x \tilde p_1(0,x,t,y)|+|\nabla_x p|\otimes |H|(0,x,t,y).
\end{align*}
Observe first that, from Lemma \ref{Le24} and \eqref{AQ77}
$$
|\nabla_x \tilde p_1(0,x,t,y)|\lesssim t^{-1/2} {\bf g}_\lambda(0,x,t,y)\lesssim t^{-1/2}\bar p(0,x,t,y).
$$
Secondly,  \eqref{BD_UNIF_H} yields
\begin{align*}
|\nabla_x p|\otimes |H|(0,x,t,y)&\leq \int^t_0\int_{\R^d}f_1(r)\bar p(0,x,r,z) |H(r,z,t,y)|\dif z \dif r
\\&\lesssim \int^t_0 f_1(r)(t-r)^{-1+\frac{\alpha}{2}}\int_{\R^d}\bar p(0,x,r,z) \bar p(r,z,t,y)\dif z \dif r
\\&= \left(\int^t_0 f_1(r)(t-r)^{-1+\frac{\alpha}{2}}\dif r\right) \bar p(0,x,t,y),
\end{align*}
using also \eqref{CK_PROP_FOR_BAR_P} for the last identity. Thus,
\begin{align*}
f_1(t)\lesssim t^{-\frac12}+\int^t_0(t-r)^{-1+\frac{\alpha}{2}} f_1(r)\dif r.
\end{align*}
By the Volterra type Gronwall inequality, we obtain
\begin{equation}
f_1(t)\lesssim t^{-\frac12}\Rightarrow |\nabla_x p(0,x,t,y)|\lesssim t^{-\frac12}\bar p(0,x,t,y).\label{BD_GRAD_PARAM}
\end{equation}
\subsection{Second order derivative estimates with respect to the backward variable $x$}\label{SEC_FOR_BOUNDING_SECOND_ORDER_DERIVATIVES}
We assume for this section that  {\bf (H$^b_\beta$)} holds for some $\beta\in (0,1]$. It is crucial to take here $\beta>0 $.
Below we fix $t\in(0,T]$ and define for $s<t$
\begin{equation}
\label{DEF_F2}
f_2(s):=(t-s)\cdot\sup_{x,y}|\nabla^2_x p(s,x,t,y)|/\bar p(s,x,t,y).
\end{equation}
By Theorem \ref{CTR_DER_HK_SMOOTH_COEFF} and \eqref{HK_BOUNDS_DELTA}, we have
$$
\sup_{s\leq t}f_2(s)<\infty.
$$
To derive the estimate of the second order derivative of the heat kernel, we use the backward Duhamel representation \eqref{D0}. And for fixed freezing parameters  $(\tau,\xi) $ we differentiate twice w.r.t. $x$ to derive:
\begin{align}
\nabla^2_xp(s,x,t,y)
&=\nabla^2_x\tilde{p}^{(\tau,\xi)}(s,x,t,y)
+\int^t_s\int_{\R^d}\nabla^2_x\tilde{p}^{(\tau,\xi)}(s,x,r,z)\cA^{(\tau,\xi)}_{r,z}p(r,z,t,y)\dif z\dif r\no\\
&=\nabla^2_y\tilde{p}^{(\tau,\xi)}(s,x,t,y)
+\int^t_s\int_{\R^d}\nabla^2_z\tilde{p}^{(\tau,\xi)}(s,x,r,z)\cA^{(\tau,\xi)}_{r,z}p(r,z,t,y)\dif z\dif r,
\label{DECOMP_DER_2_1}
\end{align}
using again the explicit expression \eqref{GAU} for the second equality.
Let us now denote for a parameter $\eps>0$ that might depend on $r$ to be specified later on,
\begin{equation}\label{ABBR_2_BIS}
A^{\eps,(\tau,\xi)}_{r,z}:=a_\eps(r,z)-a_\eps(r,\theta_{r,\tau}(\xi)),\ \ \bar A^{(\tau,\xi)}_{r,z}:=A^{(\tau,\xi)}_{r,z}-A^{\eps,(\tau,\xi)}_{r,z},
\end{equation}
where similarly to \eqref{DEF_RHO_EPS_AND_MOLL_DRIFT},  $a_\eps(r,z)=a(r,\cdot)*\rho_\eps (z) $.
Choosing the freezing  point $(\tau,\xi)=(s,x)$ and setting as well 
$$
\tilde p_0(s,x,t,y)=\tilde{p}^{(s,x)}(s,x,t,y),\ \ u:=(t+s)/2,
$$ 
we decompose the expression in \eqref{DECOMP_DER_2_1} as follows:
\begin{align}
\nabla^2_xp(s,x,t,y)
=:\sum_{i=1}^5I_i(s,x,t,y),
\label{DECOMP_DER_2_2}
\end{align}
 where $I_1(s,x,t,y):=\nabla^2_y\tilde{p}_0(s,x,t,y)$ and
\begin{align*}
I_2(s,x,t,y)&:=\int^u_s\int_{\R^d}\nabla^2_z\tilde{p}_0(s,x,r,z)\tr(A^{(s,x)}_{r,z}\cdot\nabla^2_z p(r,z,t,y))\dif z\dif r
\\ I_3(s,x,t,y)&:=\int^t_u\int_{\R^d}\nabla^2_z\tilde{p}_0(s,x,r,z)\tr(A^{\eps,(s,x)}_{r,z}\cdot\nabla^2_z p(r,z,t,y))\dif z\dif r
\\ I_4(s,x,t,y)&:=\int^t_u\int_{\R^d}\nabla^2_z\tilde{p}_0(s,x,r,z)\tr(\bar A^{\eps,(s,x)}_{r,z}\cdot\nabla^2_z p(r,z,t,y))\dif z\dif r
\\ I_5(s,x,t,y)&:=\int^t_s\int_{\R^d}\nabla^2_z\tilde{p}_0(s,x,r,z)B^{(s,x)}_{r,z}\cdot\nabla_z p(r,z,t,y)\dif z\dif r.
\end{align*}
By Lemma \ref{Le21}, \eqref{AQ1} and \eqref{AQ77}, it is easy to see that
$$
|I_1(s,x,t,y)|\lesssim  (t-s)^{-1}g_\lambda( t-s,\theta_{t,s}(x)-y)\lesssim (t-s)^{-1}\bar p(s,x,t,y).
$$
For $I_2$, by \eqref{a2} and again \eqref{AQ77}, we have
\begin{align*}
|I_2(s,x,t,y)|&\lesssim \int^u_s\int_{\R^d}\frac{g_\lambda( r-s,\theta_{ r,s}(x)-z)}{ r-s}|z-\theta_{ r,s}(x)|^\alpha|\nabla^2_x p|(r,z,t,y))\dif z\dif r
\\&\lesssim \int^u_s \frac{(r-s)^{\alpha/2} f_2(r)}{(r-s)(t-r)}
 \int_{\R^d}\bar p(s,x,r,z)\bar p(r,z,t,y)\dif z\dif r
\\&\leq  (t-s)^{-1}\bar p(s,x,t,y)\int^t_s(r-s)^{-1+\frac{\alpha}{2}}f_2(r)\dif r.
\end{align*}
For $I_3$,  integrating by parts, we have
\begin{align*}
|I_3(s,x,t,y)|&\lesssim \int^t_u\int_{\R^d}\Big|\nabla^3_z\tilde{p}_0(s,x,r,z)
\Big|\cdot |A^{\eps,(s,x)}_{r,z}|\cdot|\nabla_z p(r,z,t,y)|\dif z\dif r
\\&+\int^t_u\int_{\R^d}\Big|\nabla^2_z\tilde{p}_0(s,x,r,z)\Big|\cdot |\nabla_zA^{\eps,(s,x)}_{r,z}|\cdot|\nabla_z p(r,z,t,y)|\dif z\dif r.
\end{align*}
Note that by the property of convolutions,
$$
|\nabla_z A^{\eps,(s,x)}_{r,z}|\lesssim \eps^{-1+\alpha},\ \ 
|A^{\eps,(s,x)}_{r,z}|\lesssim |z-\theta_{r,s}(x)|^\alpha,\ \ |\bar A^{\eps,(s,x)}_{r,z}|\lesssim\eps^\alpha.
$$
In particular, taking $\eps=(t-r)^{1/2}$, by Lemma \ref{Le21}, \eqref{AQ1}, \eqref{AQ77} and using as well the bound \eqref{BD_GRAD_PARAM} on the gradient established in the previous section, we obtain
\begin{align*}
|I_3(s,x,t,y)|&\lesssim \int^t_u\int_{\R^d}\frac{\bar p(s,x,r,z)}{(r-s)^{\frac 32}}\cdot (r-s)^{\frac \alpha2}
\cdot\frac{\bar p(r,z,t,y)}{(t-r)^{\frac 12}}\dif z\dif r
\\&\quad+\int^t_u\int_{\R^d}\frac{\bar p(s,x,r,z)}{ r-s}\cdot (t-r)^{\frac \alpha2}\cdot\frac{\bar p(r,z,t,y)}{t-r}\dif z\dif r
\\&\lesssim\bar p(s,x,t,y) \int^t_u(r-s)^{-\frac{3-\alpha}{2}}(t-r)^{-\frac{1}{2}}\dif r
\\&\quad+\bar p(s,x,t,y) \int^t_u(r-s)^{-1}(t-r)^{-1+\frac{\alpha}{2}}\dif r
\\&\lesssim\bar p(s,x,t,y)  (t-s)^{-1+\frac\alpha 2},
\end{align*}
and
\begin{align*}
|I_4(s,x,t,y)|&\lesssim \int^t_u\int_{\R^d}\frac{\bar p(s,x,r,z)}{ r-s}\cdot (t-r)^{\frac \alpha2}
\cdot\frac{f_2(r)\bar p(r,z,t,y)}{t-r}\dif z\dif r
\\&\lesssim\bar p(s,x,t,y) (t-s)^{-1}\int^t_u(t-r)^{-1+\frac{\alpha}{2}}f_2(r)\dif r.
\end{align*}
For $I_5$, from \eqref{AQ77}, 
we derive similarly to $I_2$ that
\begin{align*}
|I_5(s,x,t,y)|&\lesssim \int^t_s\int_{\R^d}\frac{g_\lambda( r-s,\theta_{ r,s}(x)-z)}{ r-s} (|z-\theta_{ r,s}(x)|^\beta+|z-\theta_{ r,s}(x)|) 
\frac{\bar p(r,z,t,y)}{(t-r)^{\frac12}}\dif z\dif r
\\&\lesssim\bar p(s,x,t,y)\int^t_s\frac{(r-s)^{\frac\beta2}+(r-s)^{\frac12}}{(r-s)(t-r)^{\frac12}}\dif r\lesssim\bar p(s,x,t,y)  (t-s)^{-1}.
\end{align*}
Combining the above estimates for the $(I_j)_{j\in \{1,\cdots,5\}} $, we obtain from \eqref{DECOMP_DER_2_2} and \eqref{DEF_F2} that:
\begin{align*}
f_2(s)\lesssim 1+\int^t_s(r-s)^{-1+\frac{\alpha}{2}}f_2(r)\dif r+\int^t_s(t-r)^{-1+\frac{\alpha}{2}}f_2(r)\dif r.
\end{align*}
Finally, from the Volterra type Gronwall inequality, we obtain 
\begin{equation}\label{BD_HESS_PARAM}
\sup_{s\in[0,t]}f_2(s)\lesssim 1\Rightarrow |\nabla^2_x p(s,x,t,y)|\lesssim (t-s)^{-1}\bar p(s,x,t,y).
\end{equation}

 \subsection{First order derivative estimate in $y$}\label{FODY}
We assume for this section that {\bf (H$^b_\beta$)} holds for some $\beta>0$ 
and the gradient of the diffusion coefficient $\sigma $ satisfies \eqref{BD_DINI_GRAD_SIGMA}.
Fix $s>0$. For $t\in(s,T]$, we define
\begin{equation}\label{DEF_F3}
f_3(t):=\sup_{x,y}|\nabla_y p(s,x,t,y)|/\bar p(s,x,t,y).
\end{equation}
By Theorem \ref{CTR_DER_HK_SMOOTH_COEFF} and \eqref{HK_BOUNDS_DELTA} we know that
$$
\int^T_0f_3(t)\dif t<\infty.
$$
In \eqref{PPX}, taking $(\tau,\xi)=(t,y)$ and recalling the notations of \eqref{ABBR_1} and 
$\tilde p_1(s,x,t,y)=\tilde p^{(t,y)}(s,x,t,y)$, by the integration by parts, we have
\begin{align}
\nabla_y p(s,x,t,y)&=-\nabla_x\tilde{p}_1(s,x,t,y)
+\int^t_s\int_{\R^d}\nabla_z p(s,x,r,z) \tr(A^{(t,y)}_{r,z}\cdot\nabla^2_z{\tilde p}_1)(r,z,t,y)\dif z\dif r
\\&\quad+\int^t_s\int_{\R^d} p(s,x,r,z) \tr((\nabla_za)(r,z)\cdot\nabla^2_z{\tilde p}_1)(r,z,t,y)\dif z\dif r
\\&\quad-\int^t_s\int_{\R^d}p(s,x,r,z)B^{(t,y)}_{r,z}\cdot\nabla^2_z{\tilde p}_1(r,z,t,y)\dif z\dif r
=:\sum_{i=1}^4J_i(s,x,t,y).\label{THE_DECOMP_GRAD_Y}
\end{align}
For $J_1$, we readily get from  \eqref{AQ77}
$$
|J_1(s,x,t,y)|\lesssim (t-s)^{-\frac12}\bar p\big(s,x,t,y\big).
$$
For $J_2$, using again \eqref{AQ77} and \eqref{DEF_F3} gives:
\begin{align*}
|J_2(s,x,t,y)|&\lesssim \int^t_s\int_{\R^d}|\nabla_z p(s,x,r,z)|\cdot(t-r)^{-1+\frac{\alpha}{2}}\bar p(r,z,t,y)\dif z\dif r
\\&\lesssim \int^t_s f_3(r)\int_{\R^d}\bar p(s,x,r,z)\cdot(t-r)^{-1+\frac{\alpha}{2}}\bar p(r,z,t,y)\dif z\dif r
\\&\lesssim \bar p(s,x,t,y)\int^t_s f_3(r)(t-r)^{-1+\frac{\alpha}{2}}\dif r.
\end{align*}
For $J_3$, we further write
\begin{align*}
J_3(s,x,t,y)
&=\int^t_s\int_{\R^d} p(s,x,r,z) \tr\Big(((\nabla_z a)(r,z)-(\nabla_z a)(r,\theta_{r,t}(y)))\cdot\nabla^2_z{\tilde p}_1\Big)(r,z,t,y)\dif z\dif r
\\&\quad+\int^t_s\int_{\R^d} p(s,x,r,z) \tr\Big((\nabla_z a)(r,\theta_{r,t}(y))\cdot\nabla^2_z{\tilde p}_1\Big)(r,z,t,y)\dif z\dif r
\\&=:J_{31}(s,x,t,y)+J_{32}(s,x,t,y).
\end{align*}
For $J_{31}$, as above, by \eqref{AQ77} we have
\begin{align*}
|J_{31}(s,x,t,y)|&\lesssim \int^t_s\int_{\R^d}\bar p(s,x,r,z)\cdot(t-r)^{\frac{\alpha}2-1}\bar p(r,z,t,y)\dif z\dif r
\lesssim \bar p(s,x,t,y).
\end{align*}
For $J_{32}$, by the integration by parts again, we derive
\begin{align*}
|J_{32}(s,x,t,y)|&\lesssim \int^t_s\int_{\R^d}|\nabla_zp(s,x,r,z)|\cdot |\nabla_z{\tilde p}_1(r,z,t,y)|\dif z\dif r
\\&\lesssim \int^t_sf_3(r)\int_{\R^d}\bar p(s,x,r,z)\cdot(t-r)^{-\frac 12}\bar p(r,z,t,y)\dif z\dif r
\\&\lesssim \bar p(s,x,t,y)\int^t_sf_3(r)(t-r)^{-\frac 12}\dif r.
\end{align*}
For $J_4$, we derive similarly to the term  $J_{31}$ that
$$
|J_{4}(s,x,t,y)|\lesssim \bar p(s,x,t,y).
$$
Combining the above 
above estimates for the $(J_i)_{i\in \{1,\cdots,4\}} $, we obtain from \eqref{THE_DECOMP_GRAD_Y} and \eqref{DEF_F3} that
$$
f_3(t)\lesssim (t-s)^{-\frac12}+\int^t_s f_3(r)(t-r)^{-1+\frac{\alpha}{2}}\dif r,
$$
which in turn yields 
\begin{equation}
f_3(t)\lesssim (t-s)^{-\frac12}\Rightarrow |\nabla_y p(s,x,t,y)|\lesssim (t-s)^{-\frac12}\bar p(s,x,t,y).\label{BD_GRAD_Y_PARAM}
\end{equation}
\subsection{Proof of Theorem \ref{th1}}\label{COMPACTNESS}
Now we turn to the notations of the beginning of Section 2 and keep the index $\eps$, associated with the spatial mollification of the coefficients. Thus,
let $p_\eps$ be the corresponding heat kernel and $X^\eps_{t,s}(x)$ the solution of
SDE \eqref{SDE_1} corresponding to $b_\eps$ and $\sigma_\eps$. It is well known, see e.g. Theorem 11.1.4 in \cite{stro:vara:79}, that under \textbf{(H$^b_\beta$)} and \textbf{(H$^\sigma_{\alpha}$)}, for any $f\in C^\infty_c(\mR^d)$
$$
\lim_{\eps\to 0}\mE f(X^\eps_{t,s}(x))=\mE f(X_{t,s}(x)).
$$
Moreover, from Theorem \ref{Th26} we have the following uniform estimate: there exist constants $\lambda_0,C_0>0$ depending only on $\Theta $ such that for all $\eps\in(0,1)$, 
$$
C^{-1}_0{\bf g}_{\lambda^{-1}_0}(s,x,t,y)\leq
p_\eps(s,x,t,y)\leq C_0{\bf g}_{\lambda_0}(s,x,t,y).
$$
Similarly, we derive from \eqref{BD_GRAD_PARAM}, \eqref{BD_HESS_PARAM} and \eqref{BD_GRAD_Y_PARAM} that 
under \textbf{(H$^\sigma_{\alpha}$)} and \textbf{(H$^b_0$)},
\begin{align}\label{Gr0}
\sup_\eps |\nabla_x p_\eps(s,x,t,y)|\leq C_1 (t-s)^{-1/2}{\bf g}_{\lambda_1}(s,x,t,y),
\end{align}
and under \textbf{(H$^\sigma_{\alpha}$)} and \textbf{(H$^b_\beta$)} with $\beta\in(0,1)$, $j\in \{1,2\} $,
\begin{align}\label{Gr}
\sup_\eps |\nabla^j_x p_\eps(s,x,t,y)|\leq C_2 (t-s)^{-j/2}{\bf g}_{\lambda_2}(s,x,t,y),
\end{align}
and under \textbf{(H$^\sigma_{\alpha}$)}, \textbf{(H$^b_\beta$)} with $\beta\in(0,1)$ and \eqref{BD_DINI_GRAD_SIGMA},
\begin{align}\label{Gr_Y}
\sup_\eps |\nabla_y p_\eps(s,x,t,y)|\leq C'_1 (t-s)^{-1/2}{\bf g}_{\lambda'_1}(s,x,t,y),
\end{align}
where in the above 
equations
\eqref{Gr0}-\eqref{Gr_Y} the constants $C_1,C_2,C'_1 $ only depend on $\Theta $ and \textit{not} on the mollification parameter $\varepsilon$. 

In particular, for nonnegative measurable function $f$, we eventually derive 
\begin{align*}
C^{-1}_0\int_{\R^d}{\bf g}_{\lambda^{-1}_0}(s,x,t,y)f(y)\dif y\leq \mE f(X_{t,s}(x))\leq C_0\int_{\R^d}{\bf g}_{\lambda_0}(s,x,t,y)f(y)\dif y,
\end{align*}
which implies that $X_{t,s}(x)$ has a density $p(s,x,t,y)$ having lower and upper bound as in \eqref{Density_bounds_THM}. This proves point \textit{(i)} of the theorem.

Moreover, for each $s<t$, we now aim at proving that 
\begin{equation}\label{PREAL_ASCOLI1}\tag{\textbf{C}$_1$}
(x,y)\mapsto\nabla_x p_\eps(s,x,t,y) \text{ is equi-continuous 
on any compact subset of}\ \mR^d\times\mR^d,
\end{equation}
and
\begin{align}
(x,y)\mapsto\nabla^2_x p_\eps(s,x,t,y) \text{ is equi-continuous on any compact subset of}\ \mR^d\times \mR^d,\label{PREAL_ASCOLI2}\tag{\textbf{C}$_2$}\\
(x,y)\mapsto\nabla_y p_\eps(s,x,t,y) \text{ is equi-continuous on any compact subset of}\ \mR^d \times \mR^d.\label{PREAL_ASCOLI3}\tag{\textbf{C}$_3$}
\end{align}
Assume for a while that such a continuity condition holds.
Then, from the Ascoli-Arzel\`a theorem, one can find a subsequence $\eps_k$ such that for each $x,y\in\mR^d$,
$$
\nabla^j_x p_{\eps_k}(s,x,t,y)\to \nabla^j_x p(s,x,t,y),\ \ j=0,1,2,\ \nabla_y p_{\eps_k}(s,x,t,y)\to \nabla_y p(s,x,t,y).
$$
The gradient and second order derivative estimates follow, under the previously recalled additional assumptions when needed, 
from \eqref{Gr0}, \eqref{Gr} and \eqref{Gr_Y}. This completes the proof of points \textit{(ii)} to \textit{(iv)} of  the theorem up to the proof of 
\eqref{PREAL_ASCOLI1}, \eqref{PREAL_ASCOLI2} and \eqref{PREAL_ASCOLI3}. 
This equicontinuity property is proved in Appendix  \ref{EQ_CONT_ASCOLI}.\hfill $\square $

 \section{Extension to higher order derivatives}
 \label{EXT}
 We explain here how the estimates \eqref{Derivatives_bounds}, \eqref{Second_bounds}, 
\eqref{Derivatives_bounds2} can be extended for higher order derivatives in our analysis. 
We claim that under {\bf (S)} the \textit{a-priori} bounds of Theorem \ref{CTR_DER_HK_SMOOTH_COEFF} can be obtained for any $j\in\N$, using the same techniques based on the Duhamel representation of the density and \eqref{GRA}. On the other hand the circular arguments used in Section \ref{SEC_MT_CIRC} can be repeated as well, provided that the coefficients are smooth enough. 

For instance, let us assume {\bf (S)} to be in force; assume as well that $\|\nabla\sigma\|_\infty+\|\nabla b\|_\infty<\infty$ and 
for some $\a,\beta\in (0,1]$, $\kappa_2\ge 1$, 
\begin{equation}\label{COND_FOR_THIRD}
|\nabla\sigma(t,x)-\nabla\sigma(t,y)|\le \kappa_2|x-y|^{\a}, \quad  |\nabla b(t,x)-\nabla b(t,y)|\le \kappa_2|x-y|^\beta, \quad x,y\in\R^d.
\end{equation}
We aim here at proving that we can obtain bounds on the third order derivatives which only depend on {\bf (H$^\sigma_1$)}, {\bf (H$^b_1$)} and the constants in \eqref{COND_FOR_THIRD}. Namely, we want to illustrate a kind of \textit{parabolic bootstrap property}, i.e. in  \eqref{COND_FOR_THIRD} we give some H\"older conditions on the first derivatives of the coefficients which together with the assumptions {\bf (H$^\sigma_1$)}, 
{\bf (H$^b_1$)} lead to a uniform control of the third order derivatives.

As in \eqref{DECOMP_DER_2_1}, for the choice of the freezing parameters $(\t,\x)=(s,x)$
and recalling $\tilde{p}_0(s,x,t,y)=\tilde{p}^{(s,x)}(s,x,t,y)$, we have the following representation for the derivatives of order three:  
\begin{equation}\label{DER3}
\nabla^3_x p(s,x,t,y)=-\nabla^3_y\tilde{p}_0(s,x,t,y)-\int^t_s\int_{\R^d}\nabla^3_z\tilde{p}_0(s,x,r,z)\cA^{(s,x)}_{r,z}p(r,z,t,y)\dif z\dif r.
\end{equation}
Let us now concentrate on the most singular term in \eqref{DER3}. Setting $u=(t+s)/2$ and $A^{(\t,\x)}_{r,z}$, $A^{\eps,(\t,\x)}_{r,z}$, 
$\bar A^{\eps,(\t,\x)}_{r,z}$ as in \eqref{ABBR_1}, \eqref{ABBR_2_BIS} we write
\begin{align}
&\int^t_s\int_{\R^d}\nabla^3_z\tilde{p}_0(s,x,r,z)\tr\left(A^{(s,x)}_{r,z}\cdot\nabla_z^2p(r,z,t,y)\right)\dif z\dif r\\
 =&\int_u^t \int_{\R^d}\nabla^3_z\tilde{p}_0(s,x,r,z)\tr\left((A_{r,z}^{\eps,(s,x)}
+\bar A_{r,z}^{\eps,(s,x)})\cdot\nabla_z^2p(r,z,t,y)\right)\dif z\dif r\\
&+   \int^u_s\int_{\R^d}\nabla^3_z\tilde{p}_0(s,x,r,z)\tr\left(A^{(s,x)}_{r,z}\cdot\nabla_z^2p(r,z,t,y)\right)\dif z\dif r 
=:G_1(s,x,t,y)+G_2(s,x,t,y).
\end{align}
When $r\in [u,t]$, $(r-s)^{-\frac 32}\asymp (t-s)^{-\frac 32}$ is not singular. Therefore we may control $G_1$ 
similarly to the terms $I_3$ and $I_4$ appearing in Section \ref{SEC_FOR_BOUNDING_SECOND_ORDER_DERIVATIVES}, owing to the fact that the upper bound on $\nabla_z^2p$ is already available at this point.

When $r\in [s,u]$, then $(r-s)^{-\frac 32}$ is indeed singular. Thus, to control $G_2$ 
the point is precisely to exploit the regularity of the coefficients and perform an integration by parts to balance the singularity. We write
\begin{align}
G_2(s,x,t,y)
&=-\int^u_s\int_{\R^d}\nabla^2_z\tilde{p}_0(s,x,r,z)\tr\left(\nabla_zA^{(s,x)}_{r,z}\cdot \nabla_z^2p(r,z,t,y)\right)\dif z\dif r\\
&\quad -\int^u_s\int_{\R^d}\nabla^2_z\tilde{p}_0(s,x,r,z)\tr\left(A^{(s,x)}_{r,z}\cdot\nabla_z^3p(r,z,t,y)\right)\dif z\dif r,
\end{align} 
and define 
$$f_3(s):=(t-s)^{\frac 32}\sup_{x,y}|\nabla^3_xp(s,x,t,y)|/\bar p(s,x,t,y);$$
Then, exploiting the uniform bounds for the derivatives of order lower or equal than $ 2$ obtained in Section \ref{SEC_MT_CIRC}, we eventually derive 
\begin{align}
f_3(t)\lesssim 1+\int_s^u(r-s)^{-1+\frac{\a}{2}}f_3(s)\dif s \Rightarrow \sup_{s\in [0,t]}f_3(s)\lesssim 1, 
\end{align}
which yields the desired estimate for $\nabla_x^3p$. In the same manner, starting from the Duhamel expansion \eqref{D1}, 
and assuming in addition that $\|\nabla^2\s\|_\infty<\infty$ and
$|\nabla^2\s(t,x)-\nabla^2\s(t,y)|\le \kappa_3|x-y|^\alpha$ for some $\alpha\in(0,1)$ we could derive 
$$
|\nabla^2_yp(s,x,t,y)|\lesssim (t-s)^{-1}\bar{p}(s,x,t,y).
$$

A careful reading of the proof suggests that the above arguments may be repeated for any derivative of order $j>3$ in the backward variable $x$ as soon as we have appropriate regularity assumptions on $\nabla^{j-2}\s$ and $\nabla^{j-2}b$. More precisely, 
assuming that 
$$
\|\nabla^{j'}\sigma\|_\infty+\|\nabla^{j'}b\|_\infty<\infty, \ j'=1,\cdots,j-2,
$$
and for some $\a,\beta\in (0,1]$, $\kappa_{j-2}\ge 1$,
$$
|\nabla^{j-2}\sigma(t,x)-\nabla^{j-2}\sigma(t,y)|\le \kappa_{j-2}|x-y|^{\a}, 
\quad  |\nabla^{j-2} b(t,x)-\nabla^{j-2} b(t,y)|\le \kappa_{j-2}|x-y|^{\beta}, \quad x,y\in\R^d,$$ 
then we may derive 
$$|\nabla^j_xp(s,x,t,y)|\lesssim (t-s)^{-\frac{j}{2}}\bar{p}(s,x,t,y).$$
On the other hand, the derivative with respect to the forward variable $\nabla_y^{j-1}$ requires an additional assumption on $\nabla^{j-1}\s$. Again, assuming that for some $\alpha\in(0,1)$,
$|\nabla^{j-1} \s(t,x)-\nabla^{j-1} \s(t,y)|\le \textcolor{red}{\kappa_{j-1}}|x-y|^\alpha$ for any $x,y\in\R^d$, then we may derive 
$$
|\nabla^{j-1}_yp(s,x,t,y)|\lesssim (t-s)^{-\frac{j-1}{2}}\bar{p}(s,x,t,y).
$$

 \appendix

\section{Proof of the equicontinuity \eqref{PREAL_ASCOLI1}, \eqref{PREAL_ASCOLI2} and \eqref{PREAL_ASCOLI3}}\label{EQ_CONT_ASCOLI}
Importantly, we mention that we  drop in this section the subscripts and superscripts in $\eps $ for notational convenience. However, it must be recalled that we aim at proving some equicontinuity properties for the densities associated with the SDE \eqref{SDE_1} with mollified coefficients and their derivatives.
 
In this section we devote to proving the following H\"older continuity of the derivatives.
\bl
Suppose that {\bf (H$^\sigma_{\alpha}$)} and {\bf (H$^b_\beta$)} hold.
Let $T>0$ and $\gamma_1\in(0,1)$ and $\gamma_2\in(0,\alpha)$ and $\gamma_3\in(0,\alpha\wedge\beta)$.
\begin{enumerate}
\item[{\bf (C$_1$)}] There exist constants $C,\lambda>0$ depending only on $\Theta$, $\gamma_1,\gamma_2$
such that for all $(s,t)\in\mD^T_0$ and $x,x',y,y'\in\mR^d$,
\begin{align}
\label{THE_FINAL_BD_DER_2_1}
|\nabla_xp(s,x,t,y)-\nabla_xp(s,x',t,y)|&\lesssim_C \frac{|x-x'|^{\gamma_1}}{(t-s)^{(1+\gamma_1)/2}}
\Big({\bf g}_\lambda(s,x,t,y)+{\bf g}_\lambda(s,x',t,y)\Big)\\
|\nabla_xp(s,x,t,y)-\nabla_xp(s,x,t,y')|&\lesssim_C \frac{|y-y'|^{\gamma_2}}{(t-s)^{(1+\gamma_2)/2}}\Big({\bf g}_\lambda(s,x,t,y)+{\bf g}_\lambda(s,x,t,y')\Big).
\end{align}
\item[{\bf (C$_2$)}] If $\beta\in (0,1] $, there exist constants $C,\lambda>0$ depending only on $\Theta$
such that for all $(s,t)\in\mD^T_0$ and $x,x',y\in\mR^d$,
\begin{align}
\label{THE_FINAL_BD_DER_2}
|\nabla_x^2p(s,x,t,y)-\nabla_x^2p(s,x',t,y)|
&\lesssim_C \left(\frac{|x-x'|}{(t-s)^{\frac32}}+\frac{|x-x'|^{\a}+|x-x'|^\beta}{t-s}\right)
\\&\qquad\times\Big({\bf g}_\lambda(s,x,t,y)+{\bf g}_\lambda(s,x',t,y)\Big),\\
|\nabla_x^2p(s,x,t,y)-\nabla_x^2p(s,x,t,y')|
&\lesssim_C \left(\frac{|y-y'|^{\gamma_2}}{(t-s)^{1+\frac{\gamma_2}2}}+\frac{|y-y'|^{\a}+|y-y'|^\beta}{t-s}\right)
\\&\qquad\times\Big({\bf g}_\lambda(s,x,t,y)+{\bf g}_\lambda(s,x',t,y)\Big).
\end{align}
\item[{\bf (C$_3$)}] If $\sigma$ also satisfies \eqref{BD_DINI_GRAD_SIGMA} and $\beta\in(0,1)$, then there exist constants $C,\lambda>0$ depending only on $\Theta$, $\gamma_1,\gamma_3$
such that for all $(s,t)\in\mD^T_0$ and $x,x',y,y'\in\mR^d$,
\begin{align}
\label{THE_FINAL_BD_DER_2_3}
|\nabla_yp(s,x,t,y)-\nabla_yp(s,x,t,y')|
\lesssim_C \frac{|y-y'|^{\gamma_3}}{(t-s)^{(1+\gamma_3)/2}}\Big({\bf g}_\lambda(s,x,t,y)+{\bf g}_\lambda(s,x,t,y')\Big),\\
|\nabla_yp(s,x,t,y)-\nabla_yp(s,x',t,y)|
\lesssim_C \frac{|x-x'|^{\gamma_1}}{(t-s)^{(1+\gamma_1)/2}}\Big({\bf g}_\lambda(s,x,t,y)+{\bf g}_\lambda(s,x,t,y')\Big).
\end{align}
\end{enumerate}
\el
\begin{proof}
We only prove {\bf (C$_2$)} and focus on the sensitivity w.r.t the variable $x$. The sensitivity w.r.t. the variable $y$ could be established similarly. Also, the inequalities in conditions {\bf (C$_1$)} and {\bf (C$_3$)}  could be shown more directly.
\\
\\
First of all, if $|x-x'|^2>(t-s)/4$, then by \eqref{Gr}, we clearly have
\begin{align}
|\nabla_x^2 p(s,x,t,y)-\nabla_x^2 p(s,x',t,y)|&\lesssim (t-s)^{-1}\Big({\bf g}_\lambda(s,x,t,y)+{\bf g}_\lambda(s,x',t,y)\Big)\lesssim \mbox{r.h.s. of \eqref{THE_FINAL_BD_DER_2}}.
\end{align}
Next we restrict to the so-called \textit{diagonal case} 
$$
|x-x'|^2\le (t-s)/4.
$$
For any fixed freezing point $(\tau,\xi)$ and $r\in (s,t)$, by \eqref{D0}, one sees that
\begin{align}\label{e0bis}
p(s,x,t,y)=\tilde{P}_{s,r}^{(\t,\xi)}p(r,\cdot,t,y)(x)+\int_s^r
\int_{\R^d}\tilde{p}^{(\t,\xi)}(s,x,u,z)\cA^{(\t,\xi)}_{u,z}p(u,z,t,y)\dif z \dif u,
\end{align}
where, with the notations of \eqref{SDE_FROZEN}, 
$$
\tilde{P}_{s,r}^{(\t,\xi)}f(x)=\int_{\R^d}\tilde p^{(\t,\xi)}(s,x,r,z)f(z)\dif z.
$$
Let us now differentiate w.r.t. $r$. We obtain for all $(\tau,\xi)\in [0,T]\times \R^d$:
\begin{align}
0 =\partial_r [\tilde{P}_{s,r}^{(\t,\xi)}p(r,\cdot,t,y)(x)]+
\int_{\R^d}\tilde{p}^{(\t,\xi)}(s,x,r,z)\cA^{(\tau,\xi)}_{r,z}p(r,z,t,y)\dif z .\label{THE_EQ_PREVIOUS_CHANGE_POINT}
\end{align}
Fix $s_0\in(s,t)$. Now, integrating \eqref{THE_EQ_PREVIOUS_CHANGE_POINT}
between $s_0$ and $t$ and taking $\xi=\xi' $, we get
\begin{align}
0 =\tilde{p}^{(\t,\xi')}(s,x,t,y)-\tilde{P}_{s,s_0}^{(\t,\xi')}p(s_0,\cdot,t,y)(x)
+\int_{s_0}^t \dif r \int_{\R^d}\tilde{p}^{(\t,\xi')}(s,x,r,z)\cA^{(\t,\xi')}_{r,z}p(r,z,t,y)\dif z.
\label{INT_DIAGONALE}
\end{align}
Moreover, integrating \eqref{THE_EQ_PREVIOUS_CHANGE_POINT} between $s$ and $s_0$, we obtain
\begin{align}
0 =\tilde{P}_{s,s_0}^{(\t,\xi)}p(s_0,\cdot,t,y)(x)-p(s,x,t,y)+\int_s^{s_0} \dif r \int_{\R^d}\tilde{p}^{(\t,\xi)}(s,x,r,z)
\cA^{(\tau,\xi)}_{r,z}p(r,z,t,y)\dif z ; \label{INT_FUORI_DIAGONALE}
\end{align}
 Summing up the two equalities we get the following new representation for $p(s,x,t,y)$:
 \begin{align}
p(s,x,t,y)=&\tilde{p}^{(\t,\xi')}(s,x,t,y)+\Big(\tilde{P}_{s,s_0}^{(\t,\xi)}- \tilde{P}_{s,s_0}^{(\t,\xi')}\Big)p(s_0,\cdot,t,y)(x)\\
&+\int_{s_0}^t \dif r \int_{\R^d}\tilde{p}^{(\t,\xi')}(s,x,r,z)\cA^{(\tau,\xi')}_{u,z}p(r,z,t,y)\dif z\\
&+\int_s^{s_0} \dif r \int_{\R^d}\tilde{p}^{(\t,\xi)}(s,x,r,z)
\cA^{(\tau,\xi)}_{r,z}p_{\eps}(r,z,t,y)\dif z,\label{THE_DENS_WITH_CHANGING_FREEZING_POINT}
\end{align} 
which, together with \eqref{D0} by taking $x=x'$ and $\xi=\xi'$ there, yields
\begin{align}
p(s,x,t,y)-p(s,x',t,y)=& \tilde{p}^{(\t,\xi')}(s,x,t,y)-\tilde{p}^{(\t,\xi')}(s,x',t,y)+\Big(\tilde{P}_{s,s_0}^{(\t,\xi)}- \tilde{P}_{s,s_0}^{(\t,\xi')}\Big)p(s_0,\cdot,t,y)(x)\\
&+\Delta_{{\rm diag}}^{\t,\xi',\xi'}(s,t,x,x',y) + \Delta_{{\rm off-diag}}^{\t,\xi,\xi'}(s,t,x,x',y),\label{STIMA_CON_PUNTI_DIVERSI}
\end{align} 
where
\begin{align}
\Delta_{{\rm diag}}^{\t,\xi',\xi'}(s,t,x,x',y)&=\int_{s_0}^{t}\dif r\int_{\R^d}\Big[\tilde{p}^{(\t,\xi')}(s,x,r,z)-\tilde{p}^{(\t,\xi')}(s,x',r,z)\Big]
\cA^{(\tau,\xi')}_{r,z}p(r,z,t,y)\dif z
\end{align}
and
\begin{align}
\Delta_{{\rm off-diag}}^{\t,\xi,\xi'}(s,t,x,x',y)&=\int^{s_0}_s \dif r\int_{\R^d}\Big[\tilde{p}^{(\t,\xi)}(s,x,r,z)
\mathcal{A}^{(\t,\xi)}_{r,z}p(r,z,t,y)-\tilde{p}^{(\t,\xi')}(s,x',r,z)
\cA^{(\tau,\xi')}_{r,z}p(r,z,t,y)\Big]\dif z.
\end{align}
Observe that for any freezing couple  $ (\tau,\xi)$ and $h\in\mR^d$,
\begin{align}
\nabla_x^2 \tilde p^{(\t,\xi)}(s,x+h,t,y)=\nabla_y^2 \tilde p^{(\t,\xi)}(s,x,t,y-h).
\end{align}
After differentiating twice in $x$ for both sides of \eqref{STIMA_CON_PUNTI_DIVERSI} and taking $\tau=s$ and $\xi=x,\ \xi'=x'$,
we obtain 
\begin{align}
\nabla_x^2p(s,x,t,y)-\nabla_x^2p(s,x',t,y)&= \sum_{i=1}^4I_i(s,t,x,x',y),\label{STIMA_CON_PUNTI_DIVERSI_DER}
\end{align} 
where, with the notation $\tilde{p}_0(s,x,t,y)=\tilde{p}^{(s,x)}(s,x,t,y)$,
\begin{align*}
I_1(s,t,x,x',y)&:=\nabla_y^2\tilde{p}_0(s,x',t,y+x'-x)-\nabla_y^2\tilde{p}_0(s,x',t,y),\\
I_2(s,t,x,x',y)&:=\int_{\R^d}\nabla^2_z(\tilde p^{(s,x)}(s,x,s_0,z)-\tilde p^{(s,x')}(s,x,s_0,z))p(s_0,z,t,y)\dif z,\\
I_3(s,t,x,x',y)&:=\int_{s_0}^{t}\dif r\int_{\R^d}\Big[\nabla^2_z\tilde{p}_0(s,x',r,z+x'-x)-\nabla^2_z\tilde{p}_0(s,x',r,z)\Big]\cA^{(s,x')}_{r,z}p(r,z,t,y)\dif z,\\
I_4(s,t,x,x',y)&:=\int^{s_0}_s \dif r\int_{\R^d}\Big[\nabla^2_z\tilde{p}_0(s,x,r,z)
\mathcal{A}^{(s,x)}_{r,z}p(r,z,t,y)-\nabla^2_z\tilde{p}_0(s,x',r,z)\cA^{(s,x')}_{r,z}p(r,z,t,y)\Big]\dif z.
\end{align*}
Note that by Lemma \ref{Le21}, for $j\in\mN$ and $h\in\mR^d$ with $|h|^2\leq (t-s)/4$,
\begin{align}
\Big|\nabla_y^j\tilde{p}_0(s,x,t,y+h)-\nabla_y^j\tilde{p}_0(s,x,t,y)\Big|
&\le |h|\sup_{\r\in [0,1]}\big|\nabla^{j+1}_y\tilde{p}_0(s,x,t,y+\r h)\big| \\
&\lesssim |h|(t-s)^{-(j+1)/2}\sup_{\r\in [0,1]} g_\lambda(t-s,\theta_{t,s}(x)+\r h-y)\\
&\lesssim |h|(t-s)^{-(j+1)/2}g_\lambda(t-s,\theta^{(1)}_{t,s}(x)-y),\label{eC2_1}
\end{align}
using Lemma \ref{Pr1} for the last step. On the other hand, we also have
$$
\Big|\nabla_y^j\tilde{p}_0(s,x,t,y+h)\Big|\lesssim (t-s)^{-j/2}g_\lambda(t-s,\theta^{(1)}_{t,s}(x)-y).
$$
Thus, by interpolation, we get for any $\gamma\in(0,1)$,
\begin{align}
\Big|\nabla_y^j\tilde{p}_0(s,x,t,y+h)-\nabla_y^j\tilde{p}_0(s,x,t,y)\Big|\lesssim |h|^\gamma(t-s)^{-(j+\gamma)/2}g_\lambda(t-s,\theta^{(1)}_{t,s}(x)-y).
\end{align}
Hence,
\begin{align}
|I_1(s,t,x,x',y)|
\lesssim |x-x'|^\gamma(t-s)^{-1-\frac{\gamma}2}{\bf g}_\lambda(s,x',t,y).
\end{align}
To treat the remaining terms, we take $s_0=s+|x-x'|^2$. We have the following claim:
\begin{equation}
\left|\tilde{p}^{(s,x)}(s,x,s_0,y)-\tilde{p}^{(s,x')}(s,x,s_0,y)\right|\lesssim\big(|x-x'|^{\a}+|x-x'|^\beta\big){\bf g}_\lambda (s,x,s_0,y).\label{SENSI_DIFF_DENS_FREEZING}
\end{equation}
Indeed, by Lemma \ref{Pr1}, there is a constant $C=C(\Theta)$ such that
\begin{equation}\label{e_apx0}
|\theta_{r,s}(x)-\theta_{r,s}(x')|\lesssim_C |x-x'|+|r-s|, \quad x,x'\in\R^d, \, r\in[s,s_0].
\end{equation}
Recalling $\vartheta^{(\t,\xi)}_{s_0,s}=\int^{s_0}_s b(r,\theta_{r,\tau}(\xi))\dif r$ from the notations of Section \ref{SEC_FROZEN_PLUS_DUHAMEL},  we have:
\begin{align}\label{e_apx0bis}
|\vartheta^{(s,x)}_{s_0,s}-\vartheta^{(s,x')}_{s_0,s}|&
\le \int_s^{s_0}|b(r,\theta_{r,t}(x))-b(r,\theta_{r,t}(x'))|\dif r \\
& \lesssim\int_s^{s_0}|\theta_{r,s}(x)-\theta_{r,s}(x')|^\beta\dif r\lesssim |x-x'|^{2+\beta},
\end{align}
where the last step is due to $|r-s|\leq|x-x'|^2\leq|t-s|/4$.
The desired claim \eqref{SENSI_DIFF_DENS_FREEZING} follows by \eqref{GAU} and elementary but cubersome calculations.

Now,  integrating by parts, we get from \eqref{BD_HESS_PARAM}, \eqref{SENSI_DIFF_DENS_FREEZING} and  Lemma \ref{CKE}
\begin{align}
|I_2(s,t,x,x',y)|&\leq\int_{\R^d}|\tilde p^{(s,x)}(s,x,s_0,z)-\tilde p^{(s,x')}(s,x,s_0,z)|\cdot|\nabla^2_zp(s_0,z,t,y)|\dif z,\\
&\lesssim\big(|x-x'|^{\a}+|x-x'|^\beta\big)(t-s_0)^{-1}
\int_{\R^d}{\bf g}_\lambda (s,x,s_0,z){\bf g}_{\lambda'} (s_0,z,t,y)\dif z\\
&\lesssim\big(|x-x'|^{\a}+|x-x'|^\beta\big)(t-s)^{-1}{\bf g}_{\lambda''} (s,x,t,y).
\end{align}
For $I_3$, by \eqref{eC2_1} and using  arguments completely similar to those of Section \ref{SEC_FOR_BOUNDING_SECOND_ORDER_DERIVATIVES}, we have
\begin{align}
|I_3(s,t,x,x',y)|\lesssim |x-x'|^{\a}(t-s)^{-1}{\bf g}_{\lambda''} (s,x,t,y).
\end{align}
Finally, for $I_4$, from \eqref{AQ77}, we have
 \begin{align}
|I_4(s,t,x,x',y)|&\lesssim \int_{s}^{s_0}\int_{\R^d}
\frac{({\bf g}_{\lambda_1}(s,x,r,z)+{\bf g}_{\lambda_1}(s,x',r,z)){\bf g}_{\lambda_2}(r,z,t,y)}{(r-s)^{1-\frac \a2}(t-r)}\dif z\dif r\\
&\lesssim \big({\bf g}_{\lambda_3}(s,x,t,y)+{\bf g}_{\lambda_3}(s,x',t,y)\big)\int_{s}^{s_0}\frac{\dif r}{(r-s)^{1-\frac \a2}(t-r)}\\
&\lesssim \frac{|x-x'|^{\a}}{t-s}\big({\bf g}_{\lambda_3}(s,x,t,y)+{\bf g}_{\lambda_3}(s,x',t,y)\big),
\end{align}
where we have used that $|x-x'|^2\leq (t-s)/4$ and $s_0=s+|x-x'|$. Combining the above calculations, we obtain \eqref{THE_FINAL_BD_DER_2}.
\end{proof}

 \section*{Acknowledgment}
%For the first author, the study has been funded by the Russian Academic Excellence Project '5-100'.
{Research of S. Menozzi is funded by the Russian Academic Excellence Project '5-100'. 
Research of X. Zhang is funded by NNSFC grant of China (No. 11731009)  and the DFG through the CRC 1283 
``Taming uncertainty and profiting from randomness and low regularity in analysis, stochastics and their applications''. }
 
\bibliographystyle{acm}
\bibliography{bib_aggiornata}
\end{document}